\newtheorem{thm}{\bf{Theorem}}[section]
\newtheorem{lemma}{\bf{Lemma}}[section]
\newtheorem{remark}{\bf{Remark}}[section]
\newcommand{\nn}{\nonumber}
\newcommand{\vS}{\mathbf{S}}
\newcommand{\vY}{\mathbf{Y}}
\newcommand{\vy}{\mathbf{y}}
\newcommand{\vN}{\mathbf{N}}
\newcommand{\vnu}{\boldsymbol{\nu}}
\newcommand{\va}{\mathbf{a}}
\newcommand{\vepsilon}{\boldsymbol{\epsilon}}
\providecommand{\keywords}[1]{\textbf{\textit{Index terms---}} #1}
\begin{document}
\sloppy
\title{Hypothesis Testing under Subjective Priors and Costs as a Signaling Game\footnote{This research was supported in part by the Natural Sciences and Engineering Research Council (NSERC) of Canada. Part of this work \cite{cdc2018HT} was presented at the 57th IEEE Conference on Decision and Control (CDC 2018).}} 

\author{Serkan~Sar{\i}ta\c{s}$^1$ \and Sinan~Gezici$^2$ \and Serdar~Y\"uksel$^3$}
\date{%
	$^1$Division of Network and Systems Engineering, KTH Royal Institute of Technology, SE-10044, Stockholm, Sweden. Email: saritas@kth.se.\\%
	$^2$Department of Electrical and Electronics Engineering, Bilkent University, 06800, Ankara, Turkey. Email: gezici@ee.bilkent.edu.tr.\\
	$^3$Department of Mathematics and Statistics, Queen's University, K7L 3N6, Kingston, Ontario, Canada.  Email: yuksel@mast.queensu.ca.\\[2ex]%
}
\maketitle

\begin{abstract}
Many communication, sensor network, and networked control problems involve agents (decision makers) which have either misaligned objective functions or subjective probabilistic models. In the context of such setups, we consider binary signaling problems in which the decision makers (the transmitter and the receiver) have subjective priors and/or misaligned objective functions. Depending on the commitment nature of the transmitter to his policies, we formulate the binary signaling problem as a Bayesian game under either Nash or Stackelberg equilibrium concepts and establish equilibrium solutions and their properties. We show that there can be informative or non-informative equilibria in the binary signaling game under the Stackelberg and Nash assumptions, and derive the conditions under which an informative equilibrium exists for the Stackelberg and Nash setups. For the corresponding team setup, however, an equilibrium typically always exists and is always informative.  Furthermore, we investigate the effects of small perturbations in priors and costs on equilibrium values around the team setup (with identical costs and priors), and show that the Stackelberg equilibrium behavior is not robust to small perturbations whereas the Nash equilibrium is.
\end{abstract}

\keywords{Signal detection, hypothesis testing, signaling games, Nash equilibrium, Stackelberg equilibrium, subjective priors.}


\section{INTRODUCTION}

In many decentralized and networked control problems, decision makers have either misaligned criteria or have subjective priors, which necessitates solution concepts from game theory. For example, detecting attacks, anomalies, and malicious behavior with regard to security in networked control systems can be analyzed under a game theoretic perspective, see e.g., \cite{sandberg2015cyberphysical, teixeira2015secure, networkSecurity,cyberSecuritySmartGrid, dan2010stealth, varshneyByzantine, varshneyHypothesisGame,detectionGameAdversary, multiAntennaJamming, gupta2010optimal, gupta2012dynamic, basar1985complete}. 

In this paper, we consider signaling games that refer to a class of two-player games of incomplete information in which an informed decision maker (transmitter or encoder) transmits information to another decision maker (receiver or decoder) in the hypothesis testing context. In the following, we first provide the preliminaries and introduce the problems considered in the paper, and present the related literature briefly.

\subsection{Notation}

We denote random variables with capital letters, e.g., $Y$, whereas possible realizations are shown by lower-case letters, e.g., $y$. The absolute value of scalar $y$ is denoted by $|y|$. The vectors are denoted by bold-faced letters, e.g., $\vy$. For vector $\vy$, $\vy^T$ denotes the transpose and $\|\vy\|$ denotes the Euclidean ($L_2$) norm. $\mathds{1}_{\{D\}}$ represents the indicator function of an event $D$, $\oplus$ stands for the exclusive-or operator, $\mathcal{Q}$ denotes the standard $\mathcal{Q}$-function; i.e.,  $\mathcal{Q}(x)={1\over\sqrt{2\pi}}\int_{x}^{\infty}\exp\{-{t^2\over 2}\}{\rm{d}}t$, and the sign of $x$ is defined as 
\[ \text{sgn}(x)=\begin{cases} 
-1 & \text{if }x < 0 \\
0 & \text{if }x=0 \\
1 & \text{if }x>0 
\end{cases}\,.\] 

\subsection{Preliminaries}

Consider a binary hypothesis-testing problem:
\begin{align}
\begin{split}
\mathcal{H}_0 : Y = S_0 + N \;, \\
\mathcal{H}_1 : Y = S_1 + N \;,
\label{eq:simpleMeasurement}
\end{split}
\end{align}
where $Y$ is the observation (measurement) that belongs to the observation set $\Gamma=\mathbb{R}$, $S_0$ and $S_1$ denote the deterministic signals under hypothesis $\mathcal{H}_0$ and hypothesis $\mathcal{H}_1$, respectively, and $N$ represents Gaussian noise; i.e., $N \sim \mathcal{N} (0,\sigma^2)$. In the Bayesian setup, it is assumed that the prior probabilities of $\mathcal{H}_0$ and $\mathcal{H}_1$ are available, which are denoted by $\pi_0$ and $\pi_1$, respectively, with $\pi_0+\pi_1=1$.

In the conventional Bayesian framework, the aim of the receiver is to design the optimal decision rule (detector) based on $Y$ in order to minimize the Bayes risk, which is defined as \cite{Poor}
\begin{align}
	r(\delta) = \pi_0 R_0(\delta) + \pi_1 R_1(\delta) \;,
	\label{eq:riskEq}
\end{align}
where $\delta$ is the decision rule, and $R_i(\cdot)$ is the conditional risk of the decision rule when hypothesis $\mathcal{H}_i$ is true for $i\in\{0,1\}$. In general, a decision rule corresponds to a partition of the observation set $\Gamma$ into two subsets $\Gamma_0$ and $\Gamma_1$, and the decision becomes $\mathcal{H}_i$ if the observation $y$ belongs to $\Gamma_i$, where $i\in\{0,1\}$.

The conditional risks in \eqref{eq:riskEq} can be calculated as
\begin{align}
	R_i(\delta) = C_{0i}\mathsf{P}_{0i} + C_{1i} \mathsf{P}_{1i} \;,
	\label{eq:condRiskEq}
\end{align}
for $i\in\{0,1\}$, where $C_{ji}\geq0$ is the cost of deciding for
$\mathcal{H}_j$ when $\mathcal{H}_i$ is true, and $\mathsf{P}_{ji}=\mathsf{Pr}(y\in\Gamma_j|\mathcal{H}_i)$ represents the conditional probability of deciding for $\mathcal{H}_j$ given that $\mathcal{H}_i$ is true, where $i,j\in\{0,1\}$ \cite{Poor}.

It is well-known that the optimal decision rule $\delta$ which minimizes the Bayes risk is the following test, known as the likelihood ratio test (LRT):
\begin{align}
\delta : \Bigg\{ \pi_1 (C_{01}-C_{11}) p_1(y) \overset{\mathcal{H}_1}{\underset{\mathcal{H}_0}{\gtreqless}} \pi_0 (C_{10}-C_{00})p_0(y) \;,
\label{eq:optimalReceiverDecision}
\end{align}
where $p_i(y)$ represents the probability density function (PDF)
of $Y$ under $\mathcal{H}_i$ for $i\in\{0,1\}$ \cite{Poor}.

If the transmitter and the receiver have the same objective function specified by \eqref{eq:riskEq} and \eqref{eq:condRiskEq}, then the signals can be designed to minimize the Bayes risk corresponding to the decision rule in \eqref{eq:optimalReceiverDecision}. This leads to a conventional formulation which has been studied intensely in the literature \cite{Poor,KayDetection}. 

On the other hand, it may be the case that the transmitter and the receiver can have non-aligned Bayes risks. In particular, the transmitter and the receiver may have different objective functions or priors: Let $C^t_{ji}$ and $C^r_{ji}$ represent the costs from the perspective of the transmitter and the receiver, respectively, where $i,j\in\{0,1\}$. Also let $\pi_i^t$ and $\pi_i^r$ for $i\in\{0,1\}$ denote the priors from the perspective of the transmitter and the receiver, respectively, with $\pi_0^j+\pi_1^j=1$, where $j\in\{t,r\}$. Here, from transmitter's and receiver's perspectives, the priors are assumed to be mutually absolutely continuous with respect to each other; i.e., $\pi_i^t=0\Rightarrow\pi_i^r=0$ and $\pi_i^r=0\Rightarrow\pi_i^t=0$ for $i\in\{0,1\}$. This condition assures that the impossibility of any hypothesis holds for both the transmitter and the receiver simultaneously. The aim of the transmitter is to perform the optimal design of signals $\mathcal{S}=\{S_0,S_1\}$ to minimize his Bayes risk; whereas, the aim of the receiver is to determine the optimal decision rule $\delta$ over all possible decision rules $\Delta$ to minimize his Bayes risk.

The Bayes risks are defined as follows for the transmitter and the receiver:
\begin{align}
r^j(\mathcal{S},\delta) = \pi_0^j (C^j_{00} \mathsf{P}_{00} + C^j_{10} \mathsf{P}_{10}) + \pi_1^j (C^j_{01} \mathsf{P}_{01} + C^j_{11} \mathsf{P}_{11})\;,
\label{eq:gameBayesRisk}
\end{align}
for $j\in\{t,r\}$. Here, the transmitter performs the optimal signal design problem under the power constraint below:
\begin{align*}
	\mathbb{S}\triangleq\{\mathcal{S}=\{S_0,S_1\}:| S_0 | ^2 \leq P_0 \,,\; | S_1 | ^2 \leq P_1\} \;,
\end{align*}
where $P_0$ and $P_1$ denote the power limits \cite[p. 62]{Poor}.

Although the transmitter and the receiver act sequentially in the game as described above, how and when the decisions are made and the nature of the commitments to the announced policies significantly affect the analysis of the equilibrium structure. Here, two different types of equilibria are investigated: 
\begin{enumerate}
	\item Nash equilibrium: the transmitter and the receiver make simultaneous decisions.
	\item Stackelberg equilibrium : the transmitter and the receiver make sequential decisions where the transmitter is the leader and the receiver is the follower.
\end{enumerate}
In this paper, the terms \textit{Nash game} and the \textit{simultaneous-move game} will be used interchangeably, and similarly, the \textit{Stackelberg game} and the \textit{leader-follower game} will be used interchangeably.

In the simultaneous-move game, the transmitter and the receiver announce their policies at the same time, and a pair of policies $(\mathcal{S}^*, \delta^*)$ is said to be a {\bf Nash equilibrium} \cite{basols99} if
\begin{align}
	\begin{split}
		r^t(\mathcal{S}^*, \delta^*) &\leq r^t(\mathcal{S}, \delta^*) \quad \forall \,\mathcal{S} \in \mathbb{S}\;, \\
		r^r(\mathcal{S}^*, \delta^*) &\leq r^r(\mathcal{S}^*, \delta) \quad \forall \,\delta \in \Delta\;.
		\label{eq:nashEquilibrium}
	\end{split}
\end{align}
As noted from the definition in \eqref{eq:nashEquilibrium}, under the Nash equilibrium, each individual player chooses an optimal strategy given the strategies chosen by the other player.

However, in the leader-follower game, the leader (transmitter) commits to and announces his optimal policy before the follower (receiver) does, the follower observes what the leader is committed to before choosing and announcing his optimal policy, and a pair of policies $(\mathcal{S}^*, \delta^*_{\mathcal{S}^*})$ is said to be a {\bf Stackelberg equilibrium} \cite{basols99} if
\begin{align}
	\begin{split}
		&r^t(\mathcal{S}^*, \delta^*_{\mathcal{S}^*}) \leq r^t(\mathcal{S}, \delta^*_\mathcal{S}) \quad \forall \,\mathcal{S} \in \mathbb{S}\;, \\
		&\text{where } \delta^*_\mathcal{S} \text{ satisfies} \\
		&r^r(\mathcal{S}, \delta^*_\mathcal{S}) \leq r^r(\mathcal{S}, \delta _\mathcal{S}) \quad \forall \,\delta_\mathcal{S} \in \Delta  \,.
		\label{eq:stackelbergEquilibrium}
	\end{split}
\end{align}
As observed from the definition in \eqref{eq:stackelbergEquilibrium}, the receiver takes his optimal action $\delta^*_\mathcal{S}$ after observing the policy of the transmitter $\mathcal{S}$. Further, in the Stackelberg game (also often called Bayesian persuasion games in the economics literature, see \cite{dynamicGameArxiv} for a detailed review), the leader cannot backtrack on his commitment, but he has a leadership role since he can manipulate the follower by anticipating the actions of the follower.

If an equilibrium is achieved when $\mathcal{S}^*$ is non-informative (e.g., $S_0^*=S_1^*$) and $\delta^*$ uses only the priors (since the received message is useless), then we call such an equilibrium a {\it non-informative (babbling) equilibrium} \cite[Theorem 1]{SignalingGames}.

\subsection{Two Motivating Setups}

We present two different scenarios that fit into the binary signaling context discussed here and revisit these setups throughout the paper\footnote{Besides the setups discussed here (and the throughout the paper), the deception game can also be modeled as follows. In the deception game, the transmitter aims to fool the receiver by sending deceiving messages, and this goal can be realized by adjusting the transmitter costs as $C^t_{00}>C^t_{10}$ and $C^t_{11}>C^t_{01}$; i.e, the transmitter is penalized if the receiver correctly decodes the original hypothesis. Similar to the standard communication setups, the goal of the receiver is to truly identify the hypothesis; i.e., $C^r_{00}<C^r_{10}$ and $C^r_{11}<C^r_{01}$.}. 

\subsubsection{Subjective Priors}\label{sec:inconPriors}

In almost all practical applications, there is some mismatch between the true and an assumed probabilistic system/data model, which results in performance degradation. This performance loss due to the presence of mismatch has been studied extensively in various setups (see e.g.,\cite{mismatchedEstimation}, \cite{estimationRobustnessMismatch}, \cite{mismatchSurvey} and references therein). In this paper, we have a further salient aspect due to decentralization, where the transmitter and the receiver have a mismatch. We note that in decentralized decision making, there have been a number of studies on the presence of a mismatch in the priors of decision makers \cite{BasTAC85, TeneketzisVaraiya88, CastanonTeneketzis88}. In such setups, even when the objective functions to be optimized are identical, the presence of subjective priors alters the formulation from a team problem to a game problem (see \cite[Section 12.2.3]{YukselBasarBook} for a comprehensive literature review on subjective priors also from a statistical decision making perspective). 

With this motivation, we will consider a setup where the transmitter and the receiver have different priors on the hypotheses $\mathcal{H}_0$ and $\mathcal{H}_1$, and the costs of the transmitter and the receiver are identical. In particular, from transmitter's perspective, the priors are $\pi_0^t$ and $\pi_1^t$, whereas the priors are $\pi_0^r$ and $\pi_1^r$ from receiver's perspective, and $C_{ji}=C^t_{ji}=C^r_{ji}$ for $i,j\in\{0,1\}$. We will investigate equilibrium solutions for this setup throughout the paper.

\subsubsection{Biased Transmitter Cost\protect\footnote{Here, the \textit{cost} refers to the objective function (Bayes risk), not the cost of a particular decision, $C_{ji}$. Note that, throughout the manuscript, the \textit{cost} refers to $C_{ji}$ except when it is used in the phrase \textit{Biased Transmitter Cost}.}}\label{sec:biasedCost}

A further application will be for a setup where the transmitter and the receiver have misaligned objective functions. Consider a binary signaling game in which the transmitter encodes a random binary signal $x=i$ as $\mathcal{H}_i$ by choosing the corresponding signal level $S_i$ for $i\in\{0,1\}$, and the receiver decodes the received signal $y$ as $u=\delta(y)$. Let the priors from the perspectives of the transmitter and the receiver be the same; i.e., $\pi_i=\pi_i^t=\pi_i^r$ for $i\in\{0,1\}$, and the Bayes risks of the transmitter and the receiver be defined as $r^t(\mathcal{S},\delta)=\mathbb{E}[\mathds{1}_{\{1=(x\oplus u\oplus b)\}}]$ and $r^r(\mathcal{S},\delta)=\mathbb{E}[\mathds{1}_{\{1=(x\oplus u)\}}]$, respectively, where $b$ is a random variable with a Bernoulli distribution; i.e., $\alpha\triangleq\mathsf{Pr}(b=0)=1-\mathsf{Pr}(b=1)$, and $\alpha$ can be translated as the probability that the Bayes risks (objective functions) of the transmitter and the receiver are aligned. Then, the following relations can be observed:
\begin{align*}
	r^t(\mathcal{S},\delta)&=\mathbb{E}[\mathds{1}_{\{1=(x\oplus u\oplus b)\}}]=\alpha(\pi_0\mathsf{P}_{10}+\pi_1\mathsf{P}_{01})+(1-\alpha)(\pi_0\mathsf{P}_{00}+\pi_1\mathsf{P}_{11}) \\
	&\Rightarrow \quad C^t_{01}=C^t_{10}=\alpha \text{ and } C^t_{00}=C^t_{11}=1-\alpha \;, \\
	r^r(\mathcal{S},\delta)&=\mathbb{E}[\mathds{1}_{\{1=(x\oplus u)\}}]=\pi_0\mathsf{P}_{10}+\pi_1\mathsf{P}_{01} \\
	&\Rightarrow \quad C^r_{01}=C^r_{10}=1 \text{ and } C^r_{00}=C^r_{11}=0 \;.
\end{align*}  

Note that, in the formulation above, the misalignment between the Bayes risks of the transmitter and the receiver is due to the presence of the bias term $b$ (i.e., the discrepancy between the Bayes risks of the transmitter and the receiver) in the Bayes risk of the transmitter. This can be viewed as an analogous setup to what was studied in a seminal work due to Crawford and Sobel \cite{SignalingGames}, who obtained the striking result that such a bias term in the objective function of the transmitter may have a drastic effect on the equilibrium characteristics; in particular, under regularity conditions, all equilibrium policies under a Nash formulation involve information hiding; for some extensions under quadratic criteria please see \cite{tacWorkArxiv} and \cite{omerHierarchial}.


\subsection{Related Literature}

In game theory, Nash and Stackelberg equilibria are drastically different concepts. Both equilibrium concepts find applications depending on the assumptions on the leader, that is, the transmitter, in view of the commitment conditions. Stackelberg games are commonly used to model attacker-defender scenarios in security domains \cite{securityStackelberg}. In many frameworks, the defender (leader) acts first by committing to a strategy, and the attacker (follower) chooses how and where to attack after observing defender's choice. However, in some situations, security measures may not be observable for the attacker; therefore, a simultaneous-move game is preferred to model such situations; i.e., the Nash equilibrium analysis is needed \cite{NashvsStackelberg}. These two concepts may have equilibria that are quite distinct: As discussed in \cite{tacWorkArxiv,dynamicGameArxiv}, in the Nash equilibrium case, building on \cite{SignalingGames}, equilibrium properties possess different characteristics as compared to team problems; whereas for the Stackelberg case, the leader agent is restricted to be committed to his announced policy, which leads to similarities with team problem setups \cite{CedricWork,omerHierarchial,akyolITapproachGame}. However, in the context of binary signaling, we will see that the distinction is not as sharp as it is in the case of quadratic signaling games \cite{tacWorkArxiv,dynamicGameArxiv}. 

Standard binary hypothesis testing has been extensively studied over several decades under different setups \cite{Poor,KayDetection}, which can also be viewed as a decentralized control/team problem involving a transmitter and a receiver who wish to minimize a common objective function. However, there exist many scenarios in which the analysis falls within the scope of game theory; either because the goals of the decision makers are misaligned, or because the probabilistic model of the system is not common knowledge among the decision makers.   

A game theoretic perspective can be utilized for hypothesis testing problem for a variety of setups. For example, detecting attacks, anomalies, and malicious behavior in network security can be analyzed under the game theoretic perspective \cite{sandberg2015cyberphysical, teixeira2015secure, networkSecurity,cyberSecuritySmartGrid, dan2010stealth}. In this direction, the hypothesis testing and the game theory approaches can be utilized together to investigate attacker-defender type applications \cite{varshneyByzantine,varshneyHypothesisGame,detectionGameAdversary, gupta2010optimal, gupta2012dynamic, basar1985complete, multiAntennaJamming}, multimedia source identification problems \cite{sourceIdentification}, inspection games \cite{inspectionGames,Avenhaus1994,avenhausInspectLeadership}, and deception games\cite{htDeception}. In \cite{varshneyHypothesisGame}, a Nash equilibrium of a zero-sum game between Byzantine (compromised) nodes and the fusion center (FC) is investigated. The strategy of the FC is to set the local sensor thresholds that are utilized in the likelihood-ratio tests, whereas the strategy of Byzantines is to choose their flipping probability of the bit to be transmitted. In \cite{detectionGameAdversary}, a zero-sum game of a binary hypothesis testing problem is considered over finite alphabets. The attacker has control over the channel, and the randomized decision strategy is assumed for the defender. The dominant strategies in Neyman-Pearson and Bayesian setups are investigated  under the Nash assumption. The authors of \cite{Avenhaus1994,avenhausInspectLeadership} investigate both Nash and Stackelberg equilibria of a zero-sum inspection game where an inspector (environmental agency) verifies, with the help of randomly sampled measurements, whether the amount of pollutant released by the inspectee (management of an industrial plant) is higher than the permitted ones. The inspector chooses a false alarm probability $\alpha$, and determines his optimal strategy over the set of all statistical tests with false alarm probability $\alpha$ to minimize the non-detection probability. On the other side, the inspectee chooses the signal levels (violation strategies) to maximize the non-detection probability. \cite{multiAntennaJamming} considers a complete-information zero-sum game between a centralized detection network and a jammer equipped with multiple antennas and investigates pure strategy Nash equilibria for this game. The fusion center (FC) chooses the optimal threshold of a single-threshold rule in order to minimize his error probability based on the observations coming from multiple sensors, whereas the jammer disrupts the channel in order to maximize FC's error probability under instantaneous power constraints. However, unlike the setups described above, in this work, we assume an additive Gaussian noise channel, and in the game setup, a Bayesian hypothesis testing setup is considered in which the transmitter chooses signal levels to be transmitted and the receiver determines the optimal decision rule. Both players aim to minimize their individual Bayes risks, which leads to a nonzero-sum game.\cite{htDeception} investigates the perfect Bayesian Nash equilibrium (PBNE) solution of a cyber-deception game in which the strategically deceptive interaction between the deceivee (privately-informed player, sender) and the deceiver (uninformed player, receiver) are modeled by a signaling game framework. It is shown that the hypothesis testing game admits no separating (pure, fully informative) equilibria, there exist only pooling and partially-separating-pooling equilibria; i.e., non-informative equilibria. Note that, in \cite{htDeception}, the received message is designed by the deceiver (transmitter), whereas we assume a Gaussian channel between the players. Further, the belief of the receiver (deceivee) about the priors is affected by the design choices of the transmitter (deceiver), unlike this setup, in which constant beliefs are assumed.
	
Within the scope of the discussions above, the binary signaling problem investigated here can be motivated under different application contexts: subjective priors and the presence of a bias in the objective function of the transmitter compared to that of the receiver. In the former setup, players have a common goal but subjective prior information, which necessarily alters the setup from a team problem to a game problem. The latter one is the adaptation of the biased objective function of the transmitter in \cite{SignalingGames} to the binary signaling problem considered here. We discuss these further in the following.

\subsection{Contributions}

The main contributions of this paper can be summarized as follows: (i) A game theoretic formulation of the binary signaling problem is established under subjective priors and/or subjective costs. (ii) The corresponding Stackelberg and Nash equilibrium policies are obtained, and their properties (such as uniqueness and informativeness) are investigated. It is proved that an equilibrium is almost always informative for a team setup, whereas in the case of subjective priors and/or costs, it may cease to be informative. (iii) Furthermore, robustness of equilibrium solutions to small perturbations in the priors or costs are established. It is shown that, the game equilibrium behavior around the team setup is robust under the Nash assumption, whereas it is not robust under the Stackelberg assumption. (iv) For each of the results, applications to two motivating setups (involving subjective priors and the presence of a bias in the objective function of the transmitter) are presented.
 
In the conference version of this study \cite{cdc2018HT}, some of the results (in particular, the Nash and Stackelberg equilibrium solutions and their robustness properties) appear without proofs. Here we provide the full proofs of the main theorems and also include the continuity analysis of the equilibrium. Furthermore, the setup and analysis presented in \cite{cdc2018HT} are extended to the multi-dimensional case and partially to the case with an average power constraint.
 
 
The remainder of the paper is organized as follows. The team setup, the Stackelberg setup, and the Nash setup of the binary signaling game are investigated in Sections II, Section III, and Section IV, respectively. In Section V, the multi-dimensional setup is studied, and in Section VI, the setup under an average power constraint is investigated. The paper ends with Section VII, where some conclusions are drawn and directions for future research highlighted. 

\section{TEAM THEORETIC ANALYSIS: CLASSICAL SETUP with IDENTICAL COSTS and PRIORS}

Consider the team setup where the costs and the priors are assumed to be the same and available for both the transmitter and the receiver; i.e., $C_{ji}=C^t_{ji}=C^r_{ji}$ and $\pi_i=\pi_i^t=\pi_i^r$ for $i,j\in\{0,1\}$. Thus the common Bayes risk becomes $r^t(\mathcal{S},\delta)=r^r(\mathcal{S},\delta)=\pi_0 (C_{00} \mathsf{P}_{00} + C_{10} \mathsf{P}_{10}) + \pi_1 (C_{01} \mathsf{P}_{01} + C_{11} \mathsf{P}_{11})$. The arguments for the proof of the following result follow from the standard analysis in the detection and estimation literature \cite{Poor,KayDetection}. However, for completeness, and for the relevance of the analysis in the following sections, a proof is included.
\begin{thm}
	Let $\tau\triangleq{\pi_0 (C_{10}-C_{00}) \over \pi_1 (C_{01}-C_{11})}$. If $\tau\leq0$ or $\tau=\infty$, the team solution of the binary signaling setup is non-informative. Otherwise; i.e., if $0<\tau<\infty$, the team solution is always informative.
	\label{thm:teamCase}
\end{thm}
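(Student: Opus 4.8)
The plan is to reduce the receiver's side to the likelihood-ratio test in \eqref{eq:optimalReceiverDecision} and obtain a closed form for the resulting minimum Bayes risk as a function of the transmitted signals only. Writing $f_0(y)=\pi_0 C_{00}p_0(y)+\pi_1 C_{01}p_1(y)$ and $f_1(y)=\pi_0 C_{10}p_0(y)+\pi_1 C_{11}p_1(y)$ for the posterior cost densities of declaring $\mathcal{H}_0$ and $\mathcal{H}_1$ at $y$, the optimal receiver picks the smaller one pointwise, so $\min_\delta r(\mathcal{S},\delta)=\int_{\mathbb{R}}\min\{f_0(y),f_1(y)\}\,\mathrm{d}y$. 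Using $\min\{a,b\}=\tfrac12(a+b)-\tfrac12|a-b|$, I would rewrite this as
\[ \min_\delta r(\mathcal{S},\delta)=\tfrac12\big[\pi_0(C_{00}+C_{10})+\pi_1(C_{01}+C_{11})\big]-\tfrac12\, I(\mathcal{S}), \]
where $I(\mathcal{S})=\int_{\mathbb{R}}|A\,p_0(y)-B\,p_1(y)|\,\mathrm{d}y$ with $A\triangleq\pi_0(C_{10}-C_{00})$ and $B\triangleq\pi_1(C_{01}-C_{11})$, so that $\tau=A/B$. Since the bracketed term is independent of the signals, the transmitter's problem collapses to maximizing $I(\mathcal{S})$ over the admissible box $S_0\in[-\sqrt{P_0},\sqrt{P_0}]$, $S_1\in[-\sqrt{P_1},\sqrt{P_1}]$.

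For the non-informative direction, I would observe that the sign structure of $A$ and $B$ controls whether the integrand can change sign. When $\tau\le 0$, $A$ and $B$ have opposite signs (or $A=0$), so $A\,p_0-B\,p_1$ keeps a fixed sign for every choice of signals and $I(\mathcal{S})=|A|+|B|=|A-B|$; when $\tau=\infty$, i.e.\ $B=0$, one term vanishes and $I(\mathcal{S})=|A|$. In both cases $I$ is independent of $\mathcal{S}$, so the minimized risk does not depend on the signals; equivalently, the threshold test in \eqref{eq:optimalReceiverDecision} compares two expressions of fixed opposite sign and therefore returns the same hypothesis for all $y$. Hence the observation is useless to the receiver and the solution is non-informative, settling the first claim.

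For the informative direction, assume $0<\tau<\infty$, so $A$ and $B$ are nonzero with a common sign; factoring out a common $-1$ when both are negative leaves $I$ unchanged, so I may take $A,B>0$. The key step is to show that $I(\mathcal{S})$ is strictly maximized away from the diagonal $S_0=S_1$. By the triangle inequality $I(\mathcal{S})\ge\big|\int(A\,p_0-B\,p_1)\,\mathrm{d}y\big|=|A-B|$, with equality precisely when $A\,p_0-B\,p_1$ has constant sign. Whenever $S_0\neq S_1$, the Gaussian likelihood ratio $L(y)=p_1(y)/p_0(y)$ is strictly monotone in $y$ and sweeps all of $(0,\infty)$, so $L(y)=\tau$ has a single root and $A\,p_0-B\,p_1$ changes sign exactly once; thus $I(\mathcal{S})>|A-B|$ strictly for every off-diagonal signal pair, whereas $I=|A-B|$ on the diagonal. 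Since $I$ is continuous on the compact box and (for positive power limits) the box contains off-diagonal points, the maximum of $I$ is attained and strictly exceeds the diagonal value, so every maximizer has $S_0\neq S_1$ and the induced LRT threshold is finite. This makes the team solution informative.

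The main obstacle is the strict sign-change argument: it is exactly what separates the informative and non-informative regimes, and it is where the Gaussian structure (equal variances, distinct means) enters, since it guarantees that $L(y)$ is strictly monotone and unbounded in both directions so that $A\,p_0-B\,p_1$ crosses zero precisely once. The remaining points---reducing the both-negative case to $A,B>0$ by factoring $-1$, and invoking compactness and continuity for the existence of a maximizer---are routine, and I would only need to record the mild assumption $P_0,P_1>0$ to ensure that distinct signals are admissible.
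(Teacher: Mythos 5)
Your proof is correct, and it reaches the stated dichotomy by a genuinely different route than the paper. The paper first disposes of $\tau\leq 0$ and $\tau=\infty$ by a case analysis of the receiver's optimal rule (Table~\ref{table:receiverCases}), and then, for $0<\tau<\infty$, parametrizes everything by the normalized separation $d=|S_1-S_0|/\sigma$: it writes $\mathsf{P}_{10}$ and $\mathsf{P}_{01}$ as $\mathcal{Q}$-functions of $d$, differentiates the common Bayes risk with respect to $d$ as in \eqref{eq:transmitterDerivative}, and observes that under identical priors and costs this derivative collapses to $-{1\over\sqrt{2\pi}}\exp\{-{(\ln\tau)^2/(2d^2)}\}\exp\{-{d^2/8}\}\sqrt{\pi_0\pi_1(C_{10}-C_{00})(C_{01}-C_{11})}<0$, so the risk is strictly decreasing in $d$ and the optimum is the maximal separation $d^*=({\sqrt{P_0}+\sqrt{P_1}})/\sigma$. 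You instead represent the minimized risk pointwise as $\int\min\{f_0,f_1\}$, use $\min\{a,b\}=\tfrac12(a+b)-\tfrac12|a-b|$ to reduce the team problem to maximizing $I(\mathcal{S})=\int|A\,p_0-B\,p_1|$, and combine the triangle inequality (equality iff constant sign) with the single sign change of the Gaussian likelihood ratio to show that every off-diagonal signal pair strictly beats every diagonal pair; the degenerate cases follow because $I$ is then constant in $\mathcal{S}$ and the optimal test returns one hypothesis for all $y$. Each approach buys something the other does not. The paper's computation yields strict monotonicity in $d$ and the explicit optimizer $d^*=d_{\max}$, and the derivative formula \eqref{eq:transmitterDerivative} is reused verbatim in the Stackelberg analysis of Theorem~\ref{thm:stackelbergCases}, so the extra work is not wasted; your argument establishes informativeness without identifying the maximizer of $I$. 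Conversely, your argument is less tied to the Gaussian closed form: it only needs the likelihood ratio to be strictly monotone and to sweep $(0,\infty)$ so that $A\,p_0-B\,p_1$ crosses zero exactly once, which is in the spirit of the paper's closing remark that the results extend to noise densities admitting single-threshold tests. Two minor bookkeeping points if you were to write this up: justify continuity of $I$ in $(S_0,S_1)$ (dominated convergence or $L_1$-continuity of the Gaussian location family suffices, since $I$ is bounded by $|A|+|B|$), and note, as you do, that $P_0,P_1>0$ is needed so the admissible box contains off-diagonal points.
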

\begin{proof}
The players adjust $S_0$, $S_1$, and $\delta$ so that $r^t(\mathcal{S},\delta)=r^r(\mathcal{S},\delta)$ is minimized. The Bayes risk of the transmitter and the receiver in \eqref{eq:gameBayesRisk} can be written as follows\footnote{Note that we are still keeping the parameters of the transmitter and the receiver as distinct in order to be able to utilize the expressions for the game formulations.}:
\begin{align}
\begin{split}
r^j(\mathcal{S},\delta) &= \pi_0^j C^j_{00} + \pi_1^j C^j_{11} + \pi_0^j (C^j_{10}-C^j_{00})\mathsf{P}_{10} + \pi_1^j (C^j_{01}-C^j_{11})\mathsf{P}_{01} \;,
\label{eq:updatedBayesRisk}		
\end{split}
\end{align}
for $j\in\{t,r\}$.

Here, first the receiver chooses the optimal decision rule $\delta^*_{S_0,S_1}$ for any given signal levels $S_0$ and $S_1$, and then the transmitter chooses the optimal signal levels $S_0^*$ and $S_1^*$ depending on the optimal receiver policy $\delta^*_{S_0,S_1}$. 

Assuming non-zero priors $\pi_0^t, \pi_0^r, \pi_1^t$, and $\pi_1^r$, the different cases for the optimal receiver decision rule can be investigated by utilizing \eqref{eq:optimalReceiverDecision} as follows:
\begin{enumerate}
	\item If $C^r_{01}>C^r_{11}$,
	\begin{enumerate}
		\item if $C^r_{10}>C^r_{00}$, the LRT in \eqref{eq:optimalReceiverDecision} must be applied to determine the optimal decision.
		\item if $C^r_{10}\leq C^r_{00}$, the left-hand side (LHS) of the inequality in \eqref{eq:optimalReceiverDecision} is always greater than the right-hand side (RHS); thus, the receiver always chooses $\mathcal{H}_1$.
	\end{enumerate}
	\item If $C^r_{01}=C^r_{11}$,
	\begin{enumerate}
		\item if $C^r_{10}>C^r_{00}$, the LHS of the inequality in \eqref{eq:optimalReceiverDecision} is always less than the RHS; thus, the receiver always chooses $\mathcal{H}_0$.
		\item if $C^r_{10}=C^r_{00}$, the LHS and RHS of the inequality in \eqref{eq:optimalReceiverDecision} are equal; hence, the receiver is indifferent of deciding $\mathcal{H}_0$ or $\mathcal{H}_1$. 
		\item if $C^r_{10}<C^r_{00}$, the LHS of the inequality in \eqref{eq:optimalReceiverDecision} is always greater than the RHS; thus, the receiver always chooses $\mathcal{H}_1$.
	\end{enumerate}
	\item If $C^r_{01}<C^r_{11}$,
	\begin{enumerate}
		\item if $C^r_{10}\geq C^r_{00}$, the LHS of the inequality in \eqref{eq:optimalReceiverDecision} is always less than the RHS; thus, the receiver always chooses $\mathcal{H}_0$.
		\item if $C^r_{10}<C^r_{00}$, the LRT in \eqref{eq:optimalReceiverDecision} must be applied to determine the optimal decision.
	\end{enumerate}
\end{enumerate}

The analysis above is summarized in Table~\ref{table:receiverCases}:
\begin{table}[ht]
\centering
\caption{Optimal decision rule analysis for the receiver.}
\label{table:receiverCases}
\begin{adjustbox}{max width=\textwidth}
	\begin{tabular}{|c|c|c|c|}
		\hline
		& \boldmath$C^r_{10}>C^r_{00}$ & \boldmath$C^r_{10}=C^r_{00}$ & \boldmath$C^r_{10}<C^r_{00}$ \\ \hline
		\boldmath$C^r_{01}>C^r_{11}$	& LRT & always $\mathcal{H}_1$ & always $\mathcal{H}_1$ \\ \hline
		\boldmath$C^r_{01}=C^r_{11}$	& always $\mathcal{H}_0$ & indifferent ($\mathcal{H}_0$ or $\mathcal{H}_1$) & always $\mathcal{H}_1$ \\ \hline
		\boldmath$C^r_{01}<C^r_{11}$	& always $\mathcal{H}_0$ & always $\mathcal{H}_0$ & LRT \\ \hline
	\end{tabular}
\end{adjustbox}
\end{table}

As it can be observed from Table~\ref{table:receiverCases}, the LRT is needed only when $\tau\triangleq{\pi_0^r (C^r_{10}-C^r_{00}) \over \pi_1^r (C^r_{01}-C^r_{11})}$ takes a finite positive value; i.e., $0<\tau<\infty$. Otherwise; i.e., $\tau\leq0$ or $\tau=\infty$, since the receiver does not consider any message sent by the transmitter, the equilibrium is non-informative.

For $0<\tau<\infty$, let $\zeta\triangleq\text{sgn}(C^r_{01}-C^r_{11})$ $($notice that $\zeta=\text{sgn}(C^r_{01}-C^r_{11})=\text{sgn}(C^r_{10}-C^r_{00})$ and $\zeta\in\{-1,1\})$. Then, the optimal decision rule for the receiver in \eqref{eq:optimalReceiverDecision} becomes
\begin{align}
\delta : \Bigg\{ \zeta{ p_1(y)\over p_0(y)} \overset{\mathcal{H}_1}{\underset{\mathcal{H}_0}{\gtreqless}} \zeta{\pi_0^r (C^r_{10}-C^r_{00}) \over \pi_1^r (C^r_{01}-C^r_{11})} = \zeta\tau \;.
\label{eq:lrt}
\end{align}
Let the transmitter choose optimal signals $\mathcal{S}=\{S_0,S_1\}$. Then the measurements in \eqref{eq:simpleMeasurement} become $\mathcal{H}_i : Y \sim \mathcal{N} (S_i,\sigma^2)$ for $i\in\{0,1\}$,
as $N\sim\mathcal{N} (0,\sigma^2)$, and the optimal decision rule for the receiver is obtained by utilizing \eqref{eq:lrt} as
\begin{align}
\delta^*_{S_0,S_1} &: \Bigg\{ \zeta y(S_1-S_0) \overset{\mathcal{H}_1}{\underset{\mathcal{H}_0}{\gtreqless}} \zeta\left(\sigma^2\ln(\tau)+{S_1^2-S_0^2\over2}\right)\;.
\label{eq:receiverLRT}
\end{align}
Since $\zeta Y(S_1-S_0)$ is distributed as $\mathcal{N} \Big(\zeta(S_1-S_0)S_i,(S_1-S_0)^2\sigma^2\Big)$ under $\mathcal{H}_i$ for $i\in\{0,1\}$, the conditional probabilities can be written based on \eqref{eq:receiverLRT} as follows:
\begin{align}
\mathsf{P}_{10}&=\mathsf{Pr}(y\in\Gamma_1|\mathcal{H}_0)=\mathsf{Pr}(\delta(y)=1|\mathcal{H}_0)=1-\mathsf{Pr}(\delta(y)=0|\mathcal{H}_0)=1-\mathsf{P}_{00}\nn\\
&=\mathcal{Q}\left(\zeta\left({\sigma\ln(\tau)\over|S_1-S_0|}+{|S_1-S_0|\over2\sigma}\right)\right)\;,
\label{eq:condProbs}
\end{align}
and similarly, $\mathsf{P}_{01}$ can be derived as $\mathsf{P}_{01}=\mathcal{Q}\left(\zeta\left(-{\sigma\ln(\tau)\over|S_1-S_0|}+{|S_1-S_0|\over2\sigma}\right)\right)$.

By defining $d\triangleq{|S_1-S_0|\over\sigma}$, $\mathsf{P}_{10}=\mathcal{Q}\left(\zeta\left({\ln(\tau)\over d}+{d\over2}\right)\right)$ and $\mathsf{P}_{01}=\mathcal{Q}\left(\zeta\left(-{\ln(\tau)\over d}+{d\over2}\right)\right)$ can be obtained. Then, the optimum behavior of the transmitter can be found by analyzing the derivative of the Bayes risk of the transmitter in \eqref{eq:updatedBayesRisk} with respect to $d$:
\begin{align}
\begin{split}
{\mathrm{d}\,r^t(\mathcal{S},\delta)\over\mathrm{d}\,d}&= -{1\over\sqrt{2\pi}}\exp\left\{-{(\ln\tau)^2\over 2d^2}\right\}\exp\left\{-{d^2\over8}\right\}\\
&\quad\;\times\Bigg(\pi_0^t\zeta (C^t_{10}-C^t_{00})\tau^{-{1\over2}}\left(-{\ln\tau\over d^2}+{1\over2}\right)+\pi_1^t\zeta (C^t_{01}-C^t_{11})\tau^{1\over2}\left({\ln\tau\over d^2}+{1\over2}\right)\Bigg)\;.
\label{eq:transmitterDerivative}
\end{split}
\end{align}

In \eqref{eq:transmitterDerivative}, if we utilize $C_{ji}=C^t_{ji}=C^r_{ji}$, $\pi_i=\pi_i^t=\pi_i^r$ and $\tau={\pi_0 (C_{10}-C_{00}) \over \pi_1 (C_{01}-C_{11})}$, we obtain the following:
\begin{align*}
	{\mathrm{d}\,r^t(\mathcal{S},\delta)\over\mathrm{d}\,d}&=-{1\over\sqrt{2\pi}}\exp\left\{-{(\ln\tau)^2\over 2d^2}\right\}\exp\left\{-{d^2\over8}\right\}\sqrt{\pi_0\pi_1(C_{10}-C_{00})(C_{01}-C_{11})}<0 \;.
\end{align*}	
Thus, in order to minimize the Bayes risk, the transmitter always prefers the maximum $d$, i.e., $d^*={\sqrt{P_0}+\sqrt{P_1}\over\sigma}$, and the equilibrium is informative. 
\end{proof}

\begin{remark} \label{rem:teamUnique}
	\begin{itemize}
		\item[(i)] Note that there are two informative equilibrium points which satisfy $d^*={\sqrt{P_0}+\sqrt{P_1}\over\sigma}$: $(S_0^*,S_1^*)=\left(-\sqrt{P_0},\sqrt{P_1}\right)$ and $(S_0^*,S_1^*)=\left(\sqrt{P_0},-\sqrt{P_1}\right)$, and the decision rule of the receiver is chosen based on the rule in \eqref{eq:receiverLRT} accordingly. Actually, these equilibrium points are essentially unique; i.e., they result in the same Bayes risks for the transmitter and the receiver.
		\item[(ii)] In the non-informative equilibrium, the receiver chooses either $\mathcal{H}_0$ or $\mathcal{H}_1$ as depicted in Table~\ref{table:receiverCases}. Since the message sent by the transmitter has no effect on the equilibrium, there are infinitely many ways of signal selection, which implies infinitely many equilibrium points. However, all these points are essentially unique; i.e., they result in the same Bayes risks for the transmitter and the receiver. Actually, if the receiver always chooses $\mathcal{H}_i$, the Bayes risks of the players are $r^j(\mathcal{S},\delta)=\pi^j_0C^j_{i0}+\pi_1^jC_{i1}^j$ for $i\in\{0,1\}$ and $j\in\{t,r\}$.
	\end{itemize}
\end{remark}

\section{STACKELBERG GAME ANALYSIS}

Under the Stackelberg assumption, first the transmitter (the leader agent) announces and commits to a particular policy, and then the receiver (the follower agent) acts accordingly. In this direction, first the transmitter chooses optimal signals $\mathcal{S}=\{S_0,S_1\}$ to minimize his Bayes risk $r^t(\mathcal{S},\delta)$, then the receiver chooses an optimal decision rule $\delta$ accordingly to minimize his Bayes risk $r^r(\mathcal{S},\delta)$. Due to the sequential structure of the Stackelberg game, besides his own priors and costs, the transmitter also knows the priors and the costs of the receiver so that he can adjust his optimal policy accordingly. On the other hand, besides his own priors and costs, the receiver knows only the policy and the action (signals $\mathcal{S}=\{S_0,S_1\}$) of the transmitter as he announces during the game-play; i.e., the costs and priors of the transmitter are not available to the receiver. 

\subsection{Equilibrium Solutions}
Under the Stackelberg assumption, the equilibrium structure of the binary signaling game can be characterized as follows:

\begin{thm}
If $\tau\triangleq{\pi_0^r (C^r_{10}-C^r_{00}) \over \pi_1^r (C^r_{01}-C^r_{11})}\leq0$ or $\tau=\infty$, the Stackelberg equilibrium of the binary signaling game is non-informative. Otherwise; i.e., if $0<\tau<\infty$, let $d\triangleq{|S_1-S_0|\over\sigma}$, $d_{\max}\triangleq{\sqrt{P_0}+\sqrt{P_1}\over\sigma}$, $\zeta\triangleq\text{sgn}(C^r_{01}-C^r_{11})$, $k_0\triangleq\pi_0^t\zeta(C^t_{10}-C^t_{00})\tau^{-{1\over2}}$, and $k_1\triangleq\pi_1^t\zeta(C^t_{01}-C^t_{11})\tau^{1\over2}$. Then, the Stackelberg equilibrium structure can be characterized as in Table~\ref{table:stackelbergSummary}, where $d^*=0$ stands for a non-informative equilibrium, and a nonzero $d^*$ corresponds to an informative equilibrium.
\begin{table*}[ht]
	\centering
	\caption{Stackelberg equilibrium analysis for $0<\tau<\infty$.}
	\label{table:stackelbergSummary}
	\begin{adjustbox}{max width=\textwidth}
		\begin{tabular}{|c|c|c|c|}
			\hline
			& \boldmath$\ln\tau\;(k_0-k_1)<0$ & \boldmath$\ln\tau\;(k_0-k_1)\geq0$ \\ \hline
			\boldmath$k_0+k_1<0$	& $d^*=\min\Big\{d_{\max}, \sqrt{\Big|{2\ln\tau(k_0-k_1)\over(k_0+k_1)}\Big|}\Big\}$ & $d^*=0$, non-informative  \\ \hline
			\boldmath$k_0+k_1\geq0$	& $d^*=d_{\max}$ & {$\!\begin{aligned}
				d_{\max}^2<\Big|{2\ln\tau(k_0-k_1)\over(k_0+k_1)}\Big|&\Rightarrow d^*=0\text{, non-informative} \\
				d_{\max}^2\geq\Big|{2\ln\tau(k_0-k_1)\over(k_0+k_1)}\Big|&\Rightarrow \left({k_1\over k_0\tau}\right)^{\text{sgn}(\ln(\tau))}\mathcal{Q}\left({|\ln(\tau)|\over d_{\max}}-{d_{\max}\over2}\right)-\mathcal{Q}\left({|\ln(\tau)|\over d_{\max}}+{d_{\max}\over2}\right)\overset{d^*=d_{\max}}{\underset{d^*=0}{\gtreqless}}0 \end{aligned}$} \\ \hline
		\end{tabular}
	\end{adjustbox}
\end{table*}
\label{thm:stackelbergCases}
\end{thm}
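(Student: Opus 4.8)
The plan is to build directly on the receiver's best-response analysis already carried out in the proof of Theorem~\ref{thm:teamCase}. That analysis used only the receiver's own costs and priors to determine $\delta^*_{S_0,S_1}$, and only the transmitter's costs and priors to form $r^t$; consequently the receiver's LRT, the conditional probabilities $\mathsf{P}_{10}=\mathcal{Q}(\zeta(\ln\tau/d+d/2))$ and $\mathsf{P}_{01}=\mathcal{Q}(\zeta(-\ln\tau/d+d/2))$, and the derivative formula \eqref{eq:transmitterDerivative} all remain valid verbatim here, \emph{without} the simplification $C^t=C^r$, $\pi^t=\pi^r$ invoked at the end of that proof. The case $\tau\le 0$ or $\tau=\infty$ is then immediate: by Table~\ref{table:receiverCases} the receiver ignores the message, so the equilibrium is non-informative. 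For $0<\tau<\infty$ the problem reduces to minimizing $r^t$ over the single scalar $d\in[0,d_{\max}]$.

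I would first substitute $k_0,k_1$ into \eqref{eq:transmitterDerivative}. Since the leading exponential factor is strictly positive, the sign of $\mathrm{d}r^t/\mathrm{d}d$ is opposite to that of
\[
g(d)\triangleq-\frac{\ln\tau\,(k_0-k_1)}{d^2}+\frac{k_0+k_1}{2}.
\]
Writing $A\triangleq\ln\tau\,(k_0-k_1)$ and $B\triangleq(k_0+k_1)/2$, the entire theorem becomes a study of the sign of $g(d)=-A/d^2+B$ on $(0,d_{\max}]$, organized by the signs of $A$ and $B$ --- exactly the four boxes of Table~\ref{table:stackelbergSummary}. When $g$ has constant sign, $r^t$ is monotone and the optimizer is a corner: if $A<0,B\ge0$ then $g>0$, so $r^t$ is decreasing and $d^*=d_{\max}$; if $A\ge0,B<0$ then $g<0$, so $r^t$ is increasing and $d^*=0$. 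In the case $A<0,B<0$, $g$ decreases strictly from $+\infty$ to $B<0$ with a unique zero at $d_0=\sqrt{A/B}=\sqrt{\lvert 2\ln\tau(k_0-k_1)/(k_0+k_1)\rvert}$; since $g>0$ before and $g<0$ after, $r^t$ decreases then increases, yielding an interior minimum truncated by the power constraint, $d^*=\min\{d_{\max},d_0\}$.

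The delicate case is $A\ge0,B\ge0$, where $g$ increases from $-\infty$ to $B\ge0$ and crosses zero once at $d_0$, but now $g<0$ before $d_0$ and $g>0$ after, so $r^t$ \emph{increases} then \emph{decreases}: the interior critical point is a maximum and the minimizer must be an endpoint. If $d_{\max}^2<A/B=\lvert 2\ln\tau(k_0-k_1)/(k_0+k_1)\rvert$ the feasible interval lies entirely in the increasing region and $d^*=0$; otherwise the minimum is the smaller of $r^t(0)$ and $r^t(d_{\max})$, which is what the last box compares.

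The main obstacle is evaluating this endpoint comparison in closed form. I would take the $d\to0^+$ limits of the conditional probabilities: since the $\ln\tau/d$ term dominates, $\mathsf{P}_{10}(0)=(1-\zeta\,\mathrm{sgn}(\ln\tau))/2$ and $\mathsf{P}_{01}(0)=(1+\zeta\,\mathrm{sgn}(\ln\tau))/2$, i.e.\ the receiver collapses to always deciding one hypothesis. Next, the identities $\pi_0^t(C^t_{10}-C^t_{00})=k_0\zeta\sqrt\tau$ and $\pi_1^t(C^t_{01}-C^t_{11})=k_1\zeta/\sqrt\tau$ (from the definitions of $k_0,k_1$ and $\zeta^2=1$) let me write $r^t(d_{\max})-r^t(0)$ purely in terms of $k_0,k_1,\tau$ and two $\mathcal{Q}$-values at $d_{\max}$; applying $\mathcal{Q}(-x)=1-\mathcal{Q}(x)$ folds the arguments into $\lvert\ln\tau\rvert/d_{\max}\pm d_{\max}/2$. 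The bookkeeping splits by $\mathrm{sgn}(\ln\tau)$ and $\zeta$, but the $\zeta$-dependence cancels while the two $\mathcal{Q}$-terms swap roles as $\tau$ crosses $1$; this swap is exactly encoded by the exponent $\mathrm{sgn}(\ln\tau)$ on $k_1/(k_0\tau)$. Finally, the case constraints $A\ge0,B\ge0$ force $k_0>0$ when $\tau>1$ and $k_1>0$ when $\tau<1$, so dividing $r^t(d_{\max})-r^t(0)<0$ by the relevant positive constant ($k_0\sqrt\tau$ or $k_1/\sqrt\tau$) preserves the inequality and produces exactly the stated threshold
\[
\left(\frac{k_1}{k_0\tau}\right)^{\mathrm{sgn}(\ln\tau)}\mathcal{Q}\!\left(\frac{\lvert\ln\tau\rvert}{d_{\max}}-\frac{d_{\max}}{2}\right)-\mathcal{Q}\!\left(\frac{\lvert\ln\tau\rvert}{d_{\max}}+\frac{d_{\max}}{2}\right)\gtreqless0,
\]
with $d^*=d_{\max}$ when this is positive and $d^*=0$ when negative. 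The degenerate subcases $\tau=1$ (so $\mathrm{sgn}(\ln\tau)=0$) or $k_0k_1=0$ fall out directly from the monotonicity of $g$ and need only be verified separately.
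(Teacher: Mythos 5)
Your proposal is correct and follows essentially the same route as the paper's proof: the same derivative formula \eqref{eq:transmitterDerivative} drives a sign analysis of $d^2(k_0+k_1)-2\ln\tau\,(k_0-k_1)$, organized into the same four regimes, with the same endpoint comparison between $r^t(0)$ and $r^t(d_{\max})$ in the non-monotone case. The only difference is bookkeeping: you evaluate $r^t(0)$ via $d\to0^+$ limits of $\mathsf{P}_{10},\mathsf{P}_{01}$ and let the $\zeta$-dependence cancel algebraically, whereas the paper enumerates explicit sub-cases on $\zeta(1-\tau)$ and the $(\zeta,\tau)$ combinations — both arriving at the identical threshold rule with exponent $\text{sgn}(\ln\tau)$.
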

Before proving Theorem~\ref{thm:stackelbergCases}, we make the following remark:
\begin{remark}
	As we observed in Theorem~\ref{thm:teamCase}, for a team setup, an equilibrium is almost always informative (practically, $0<\tau<\infty$), whereas in the case of subjective priors and/or costs, it may cease to be informative. 
\end{remark}

\begin{proof}
	By applying the same case analysis as in the proof of Theorem~\ref{thm:teamCase}, it can be deduced that the equilibrium is non-informative if $\tau\leq0$ or $\tau=\infty$ (see Table~\ref{table:receiverCases}). Thus, $0<\tau<\infty$ can be assumed. 
	Then, from \eqref{eq:transmitterDerivative}, $r^t(\mathcal{S},\delta)$ is a monotone decreasing (increasing) function of $d$ if $k_0\left(-{\ln\tau\over d^2}+{1\over2}\right)+k_1\left({\ln\tau\over d^2}+{1\over2}\right)$, or equivalently $d^2(k_0+k_1)-2\ln\tau\,(k_0-k_1)$ is positive (negative) $\forall d$, where $k_0$ and $k_1$ are as defined in the theorem statement. Therefore, one of the following cases is applicable:
	\begin{enumerate}
	\item if $\ln\tau\;(k_0-k_1)<0$ and $k_0+k_1\geq0$, then $d^2 (k_0+k_1)>2\ln\tau(k_0-k_1)$ is satisfied $\forall d$, which means that $r^t(\mathcal{S},\delta)$ is a monotone decreasing function of $d$. Therefore, the transmitter tries to maximize $d$; i.e., chooses the maximum of $|S_1-S_0|$ under the constraints $| S_0 | ^2 \leq P_0$ and $| S_1 | ^2 \leq P_1$, hence $d^*=\max {|S_1-S_0|\over\sigma}={\sqrt{P_0}+\sqrt{P_1}\over\sigma}=d_{\max}$, which entails an informative equilibrium.
	\item if $\ln\tau\;(k_0-k_1)<0$, $k_0+k_1<0$, and $d_{\max}^2<\Big|{2\ln\tau(k_0-k_1)\over(k_0+k_1)}\Big|$, then $r^t(\mathcal{S},\delta)$ is a monotone decreasing function of $d$. Therefore, the transmitter maximizes $d$ as in the previous case. 
	\item if $\ln\tau\;(k_0-k_1)<0$, $k_0+k_1<0$, and $d_{\max}^2\geq\Big|{2\ln\tau(k_0-k_1)\over(k_0+k_1)}\Big|$, since $d^2(k_0+k_1)-2\ln\tau\,(k_0-k_1)$ is initially positive then negative, $r^t(\mathcal{S},\delta)$ is first decreasing and then increasing with respect to $d$. Therefore, the transmitter chooses the optimal $d^*$ such that $(d^*)^2=\Big|{2\ln\tau(k_0-k_1)\over(k_0+k_1)}\Big|$ which results in a minimal Bayes risk $r^t(\mathcal{S},\delta)$ for the transmitter. This is depicted in Figure~\ref{fig:StackelbergCase3}.
	\item if $\ln\tau\;(k_0-k_1)\geq0$ and $k_0+k_1<0$, then $d^2 (k_0+k_1)<2\ln\tau(k_0-k_1)$ is satisfied $\forall d$, which means that $r^t(\mathcal{S},\delta)$ is a monotone increasing function of $d$. Therefore, the transmitter tries to minimize $d$; i.e., chooses $S_0=S_1$ so that $d^*=0$. In this case, the transmitter does not provide any information to the receiver and the decision rule of the receiver in \eqref{eq:lrt} becomes $\delta : \zeta\overset{\mathcal{H}_1}{\underset{\mathcal{H}_0}{\gtreqless}} \zeta\tau$; i.e., the receiver uses only the prior information, thus the equilibrium is non-informative. 
	\item if $\ln\tau\;(k_0-k_1)\geq0$, $k_0+k_1\geq0$, and $d_{\max}^2<\Big|{2\ln\tau(k_0-k_1)\over(k_0+k_1)}\Big|$, then $r^t(\mathcal{S},\delta)$ is a monotone increasing function of $d$. Therefore, the transmitter chooses $S_0=S_1$ so that $d^*=0$. Similar to the previous case, the equilibrium is non-informative.
	\item if $\ln\tau\;(k_0-k_1)\geq0$, $k_0+k_1\geq0$, and $d_{\max}^2\geq\Big|{2\ln\tau(k_0-k_1)\over(k_0+k_1)}\Big|$,  $r^t(\mathcal{S},\delta)$ is first an increasing then a decreasing function of $d$, which makes the transmitter choose either the minimum $d$ or the maximum $d$; i.e., he chooses the one that results in a lower Bayes risk $r^t(\mathcal{S},\delta)$ for the transmitter. If the minimum Bayes risk is achieved when $d^*=0$, then the equilibrium is non-informative; otherwise (i.e., when the minimum Bayes risk is achieved when $d^*=d_{\max}$), the equilibrium is an informative one. There are three possible cases:
	\begin{enumerate}
		\item \underline{$\zeta(1-\tau)>0$} : 
		\begin{enumerate}
		\item If $d^*=0$, since $\delta : \zeta\overset{\mathcal{H}_1}{\underset{\mathcal{H}_0}{\gtreqless}} \zeta\tau$, the receiver always chooses $\mathcal{H}_1$, thus $\mathsf{P}_{10}=\mathsf{P}_{11}=1$ and $\mathsf{P}_{00}=\mathsf{P}_{01}=0$. Then, from \eqref{eq:updatedBayesRisk}, $r^t(\mathcal{S},\delta)=\pi_0^t C^t_{00} + \pi_1^t C^t_{11} + \pi_0^t (C^t_{10}-C^t_{00})$.
		\item If $d^*=d_{\max}$, by utilizing \eqref{eq:updatedBayesRisk} and \eqref{eq:condProbs}, $r^t(\mathcal{S},\delta)=\pi_0^t C^t_{00} + \pi_1^t C^t_{11} + \pi_0^t (C^t_{10}-C^t_{00})\mathcal{Q}\left(\zeta\left({\ln(\tau)\over d_{\max}}+{d_{\max}\over2}\right)\right) + \pi_1^t (C^t_{01}-C^t_{11})\mathcal{Q}\left(\zeta\left(-{\ln(\tau)\over d_{\max}}+{d_{\max}\over2}\right)\right)$.
		\end{enumerate}
	Then the decision of the transmitter is determined by the following:
	\begin{align}
	&\pi_0^t (C^t_{10}-C^t_{00})\overset{d^*=d_{\max}}{\underset{d^*=0}{\gtreqless}}\nn\\
	&\qquad\qquad\pi_0^t (C^t_{10}-C^t_{00})\mathcal{Q}\left(\zeta\left({\ln(\tau)\over d_{\max}}+{d_{\max}\over2}\right)\right)+ \pi_1^t (C^t_{01}-C^t_{11})\mathcal{Q}\left(\zeta\left(-{\ln(\tau)\over d_{\max}}+{d_{\max}\over2}\right)\right) \nn\\
	&\pi_0^t (C^t_{10}-C^t_{00})\mathcal{Q}\left(\zeta\left(-{\ln(\tau)\over d_{\max}}-{d_{\max}\over2}\right)\right)\overset{d^*=d_{\max}}{\underset{d^*=0}{\gtreqless}}\pi_1^t (C^t_{01}-C^t_{11})\mathcal{Q}\left(\zeta\left(-{\ln(\tau)\over d_{\max}}+{d_{\max}\over2}\right)\right) \nn\\
	\begin{split}
	&\zeta k_0\tau\mathcal{Q}\left(\zeta\left(-{\ln(\tau)\over d_{\max}}-{d_{\max}\over2}\right)\right)\overset{d^*=d_{\max}}{\underset{d^*=0}{\gtreqless}} \zeta k_1\mathcal{Q}\left(\zeta\left(-{\ln(\tau)\over d_{\max}}+{d_{\max}\over2}\right)\right) \,.
\label{eq:stackelbergCase6_1}
	\end{split}
	\end{align}
	For \eqref{eq:stackelbergCase6_1}, there are two possible cases:
	\begin{enumerate}
		\item \underline{$\zeta=1$ and $0<\tau<1$} : Since $\ln\tau(k_0-k_1)\geq0\Rightarrow k_0-k_1\leq0 $ and $k_0+k_1\geq0$, $k_1\geq0$ always. Then, \eqref{eq:stackelbergCase6_1} becomes
	\begin{align*}
	&{k_0\tau\over k_1}\mathcal{Q}\left(-{\ln(\tau)\over d_{\max}}-{d_{\max}\over2}\right)-\mathcal{Q}\left(-{\ln(\tau)\over d_{\max}}+{d_{\max}\over2}\right)\overset{d^*=d_{\max}}{\underset{d^*=0}{\gtreqless}}0 \,.
	\end{align*}
	\item \underline{$\zeta=-1$ and $\tau>1$} : Since $\ln\tau(k_0-k_1)\geq0\Rightarrow k_0-k_1\geq0 $ and $k_0+k_1\geq0$, $k_0\geq0$ always. Then, \eqref{eq:stackelbergCase6_1} becomes
	\begin{align*}
	    &{k_1\over k_0\tau}\mathcal{Q}\left({\ln(\tau)\over d_{\max}}-{d_{\max}\over2}\right)-\mathcal{Q}\left({\ln(\tau)\over d_{\max}}+{d_{\max}\over2}\right)\overset{d^*=d_{\max}}{\underset{d^*=0}{\gtreqless}}0 \,.
	\end{align*}
	\end{enumerate}
    \item \underline{$\zeta(1-\tau)=0 \Leftrightarrow \tau=1$} :  Since $k_0+k_1\geq0$ and $d^2(k_0+k_1)-2\ln\tau\,(k_0-k_1)\geq0$, $r^t(\mathcal{S},\delta)$ is a monotone decreasing function of $d$, which implies $d^*=d_{\max}$ and informative equilibrium.
		\item \underline{$\zeta(1-\tau)<0$} : 
\begin{enumerate}
	\item If $d^*=0$, since $\delta : \zeta\overset{\mathcal{H}_1}{\underset{\mathcal{H}_0}{\gtreqless}} \zeta\tau$, the receiver always chooses $\mathcal{H}_0$, thus $\mathsf{P}_{00}=\mathsf{P}_{01}=1$ and $\mathsf{P}_{10}=\mathsf{P}_{11}=0$. Then, from \eqref{eq:updatedBayesRisk}, $r^t(\mathcal{S},\delta)=\pi_0^t C^t_{00} + \pi_1^t C^t_{11} + \pi_1^t (C^t_{01}-C^t_{11})$.
	\item If $d^*=d_{\max}$, by utilizing \eqref{eq:updatedBayesRisk} and \eqref{eq:condProbs}, $r^t(\mathcal{S},\delta)=\pi_0^t C^t_{00} + \pi_1^t C^t_{11} + \pi_0^t (C^t_{10}-C^t_{00})\mathcal{Q}\left(\zeta\left({\ln(\tau)\over d_{\max}}+{d_{\max}\over2}\right)\right) + \pi_1^t (C^t_{01}-C^t_{11})\mathcal{Q}\left(\zeta\left(-{\ln(\tau)\over d_{\max}}+{d_{\max}\over2}\right)\right)$.
\end{enumerate}
Then, similar to the analysis in case-a), the decision of the transmitter is determined by the following:
\begin{align}
\begin{split}
&\zeta k_1\mathcal{Q}\left(\zeta\left({\ln(\tau)\over d_{\max}}-{d_{\max}\over2}\right)\right)\overset{d^*=d_{\max}}{\underset{d^*=0}{\gtreqless}}\zeta k_0\tau\mathcal{Q}\left(\zeta\left({\ln(\tau)\over d_{\max}}+{d_{\max}\over2}\right)\right) \,.
\label{eq:stackelbergCase6_2}
\end{split}
\end{align}
For \eqref{eq:stackelbergCase6_2}, there are two possible cases:
\begin{enumerate}
	\item \underline{$\zeta=-1$ and $0<\tau<1$} : Since $\ln\tau(k_0-k_1)\geq0\Rightarrow k_0-k_1\leq0 $ and $k_0+k_1\geq0$, $k_1\geq0$ always. Then, \eqref{eq:stackelbergCase6_2} becomes
	\begin{align*}
	&{k_0\tau\over k_1}\mathcal{Q}\left(-{\ln(\tau)\over d_{\max}}-{d_{\max}\over2}\right)-\mathcal{Q}\left(-{\ln(\tau)\over d_{\max}}+{d_{\max}\over2}\right)\overset{d^*=d_{\max}}{\underset{d^*=0}{\gtreqless}}0 \,.
	\end{align*}
	\item \underline{$\zeta=1$ and $\tau>1$} : Since $\ln\tau(k_0-k_1)\geq0\Rightarrow k_0-k_1\geq0 $ and $k_0+k_1\geq0$, $k_0\geq0$ always. Then, \eqref{eq:stackelbergCase6_2} becomes
	\begin{align*}
	&{k_1\over k_0\tau}\mathcal{Q}\left({\ln(\tau)\over d_{\max}}-{d_{\max}\over2}\right)-\mathcal{Q}\left({\ln(\tau)\over d_{\max}}+{d_{\max}\over2}\right)\overset{d^*=d_{\max}}{\underset{d^*=0}{\gtreqless}}0 \,.
	\end{align*}
\end{enumerate}
	\end{enumerate} 
	Thus, by combining all the cases, the comparison of the transmitter Bayes risks for $d^*=0$ and $d^*=d_{\max}$ reduces to the following rule:
	\begin{align}
	\begin{split}
		&\left({k_1\over k_0\tau}\right)^{\text{sgn}(\ln(\tau))}\mathcal{Q}\left({|\ln(\tau)|\over d_{\max}}-{d_{\max}\over2}\right)-\mathcal{Q}\left({|\ln(\tau)|\over d_{\max}}+{d_{\max}\over2}\right)\overset{d^*=d_{\max}}{\underset{d^*=0}{\gtreqless}}0\,.
		\label{eq:stackelbergCase9}
	\end{split}
	\end{align}    
	\end{enumerate}
\end{proof}
The most interesting case is Case-3 in which $\ln\tau\;(k_0-k_1)<0, k_0+k_1<0,$ and $d_{\max}^2\geq\Big|{2\ln\tau(k_0-k_1)\over(k_0+k_1)}\Big|$, since in all other cases, the transmitter chooses either the minimum or the maximum distance between the signal levels. Further, for classical hypothesis-testing in the team setup, the optimal distance corresponds to the maximum separation \cite{Poor}. However, in Case-3, there is an optimal distance $d^*=\sqrt{\Big|{2\ln\tau(k_0-k_1)\over(k_0+k_1)}\Big|}<d_{\max}$ that makes the Bayes risk of the transmitter minimum as it can be seen in Figure~\ref{fig:StackelbergCase3}.
\begin{figure}[ht]
	\centering
	\includegraphics[width=0.5\linewidth]{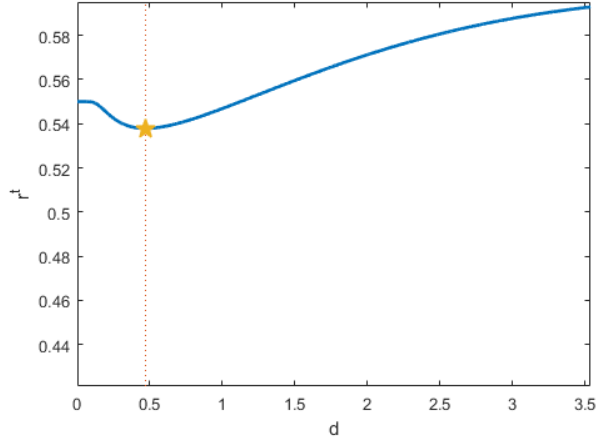}
	\caption{The Bayes risk of the transmitter versus $d$ when $ C^t_{00}=0.6, C^t_{10}=0.4, C^t_{01}=0.4, C^t_{11}=0.6, C^r_{00}=0, C^r_{10}=0.9, C^r_{01}=0.4, C^r_{11}=0, \pi_0^t=0.25, \pi_0^r=0.25, P_0=1, P_1=1$, and $\sigma = 0.1$. The optimal $d^*=\sqrt{\Big|{2\ln\tau(k_0-k_1)\over(k_0+k_1)}\Big|}=0.4704<d_{max}=20$ and its corresponding Bayes risk $r^t=0.5379$ are indicated by the star.}
	\label{fig:StackelbergCase3}
\end{figure}

\begin{remark}
Similar to the team setup analysis, for every possible case in Table~\ref{table:stackelbergSummary}, there are more than one equilibrium points, and they are essentially unique since the Bayes risks of the transmitter and the receiver depend on $d$. In particular,
\begin{itemize}
	\item[(i)] for $d^*=d_{\max}$, the equilibrium is informative, $(S_0^*,S_1^*)=\left(-\sqrt{P_0},\sqrt{P_1}\right)$ and $(S_0^*,S_1^*)=\left(\sqrt{P_0},-\sqrt{P_1}\right)$ are the only possible choices for the transmitter, which are essentially unique, and the decision rule of the receiver is chosen based on the rule in \eqref{eq:receiverLRT}.
	\item[(ii)] for $d^*=\sqrt{\Big|{2\ln\tau(k_0-k_1)\over(k_0+k_1)}\Big|}$, the equilibrium is informative, there are infinitely many choices for the transmitter and the receiver, and all of them are essentially unique; i.e., they result in the same Bayes risks for the transmitter and the receiver.
	\item[(iii)] for $d^*=0$ or $\tau\notin(0,\infty)$, the equilibrium is non-informative and there are infinitely many equilibrium points which are essentially unique; see Remark~\ref{rem:teamUnique}-(ii). 
\end{itemize}   
\end{remark}

\subsection{Continuity and Robustness to Perturbations around the Team Setup} \label{sec:ConvRobustStackelberg}
We now investigate the effects of small perturbations in priors and costs on equilibrium values. In particular, we consider the perturbations around the team setup; i.e., at the point of identical priors and costs. 

Define the perturbation around the team setup as $\vepsilon=\{\epsilon_{\pi0},\epsilon_{\pi1},\epsilon_{00},\epsilon_{01},\epsilon_{10},\epsilon_{11}\}\in\mathbb{R}^6$ such that $\pi_i^t=\pi_i^r+\epsilon_{\pi i}$ and $C_{ji}^t=C_{ji}^r+\epsilon_{ji}$ for $i,j\in\{0,1\}$ (note that the transmitter parameters are perturbed around the receiver parameters which are assumed to be fixed). Then, for $0<\tau<\infty$, at the point of identical priors and costs, small perturbations in both priors and costs imply $k_0=(\pi_0^r+\epsilon_{\pi 0})\zeta(C^r_{10}-C^r_{00}+\epsilon_{10}-\epsilon_{00})\tau^{-{1\over2}}$ and $k_1=(\pi_1^r+\epsilon_{\pi 1})\zeta(C^r_{01}-C^r_{11}+\epsilon_{01}-\epsilon_{11})\tau^{1\over2}$. Since, for $0<\tau<\infty$, $k_0=k_1=\sqrt{\pi^r_0\pi^r_1}\sqrt{(C^r_{10}-C^r_{00})(C^r_{01}-C^r_{11})}>0$ at the point of identical priors and costs, it is possible to obtain both positive and negative $(k_0-k_1)$ by choosing the appropriate perturbation $\vepsilon$ around the team setup. Then, as it can be observed from Table~\ref{table:stackelbergSummary}, even the equilibrium may alter from an informative one to a non-informative one; hence, under the Stackelberg equilibrium, the policies are not continuous with respect to small perturbations around the point of identical priors and costs, and the equilibrium behavior is not robust to small perturbations in both priors and costs. 

\subsection{Application to the Motivating Examples}

\subsubsection{Subjective Priors}
Referring to Section~\ref{sec:inconPriors}, for $0<\tau<\infty$, the related parameters can be found as follows (note that the equilibrium is non-informative if $\tau\leq0$ or $\tau=\infty$): 
	\begin{align*}
	\tau&={\pi_0^r (C_{10}-C_{00}) \over \pi_1^r (C_{01}-C_{11})} \,,\\
	k_0&=\pi_0^t\sqrt{\pi_1^r\over\pi_0^r}\sqrt{(C_{10}-C_{00})(C_{01}-C_{11})}\,,\\
	k_1&=\pi_1^t\sqrt{\pi_0^r\over\pi_1^r}\sqrt{(C_{10}-C_{00})(C_{01}-C_{11})}\,.
	\end{align*}
Since $k_0+k_1>0$, depending on the values of $\ln\tau\;(k_0-k_1)$, $d_{\max}^2$, and $\Big|{2\ln\tau(k_0-k_1)\over(k_0+k_1)}\Big|$, Case-1, Case-5 or Case-6 of Theorem~\ref{thm:stackelbergCases} may hold as depicted in Table~\ref{table:stackelbergInconsistentPriors}. Here, the decision rule in Case-6 is the same as \eqref{eq:stackelbergCase9}.
	\begin{table*}[ht]
		\centering
		\caption{Stackelberg equilibrium analysis of subjective priors case for $0<\tau<\infty$.}
		\label{table:stackelbergInconsistentPriors}
		\begin{adjustbox}{max width=\textwidth}
			\begin{tabular}{|c|c|c|c|}
				\hline
				& \boldmath$0<\tau<1$ & \boldmath$1\leq\tau<\infty$ \\ \hline
				\boldmath${\pi_0^t\over\pi_1^t}<{\pi_0^r\over\pi_1^r}$	& {$\!\begin{aligned}
					d_{\max}^2<\Big|{2\ln\tau(k_0-k_1)\over(k_0+k_1)}\Big|&\Rightarrow \text{ Case-5 applies, } d^*=0\text{, non-informative} \\
					d_{\max}^2\geq\Big|{2\ln\tau(k_0-k_1)\over(k_0+k_1)}\Big|&\Rightarrow\text{ Case-6 applies}   \end{aligned}$} & Case-1 applies, $d^*=d_{\max}$  \\ \hline
				\boldmath${\pi_0^t\over\pi_1^t}\geq{\pi_0^r\over\pi_1^r}$	& Case-1 applies, $d^*=d_{\max}$ & {$\!\begin{aligned}
					d_{\max}^2<\Big|{2\ln\tau(k_0-k_1)\over(k_0+k_1)}\Big|&\Rightarrow \text{ Case-5 applies, } d^*=0\text{, non-informative} \\
					d_{\max}^2\geq\Big|{2\ln\tau(k_0-k_1)\over(k_0+k_1)}\Big|&\Rightarrow\text{ Case-6 applies}   \end{aligned}$} \\ \hline
			\end{tabular}
		\end{adjustbox}
	\end{table*}
	
\subsubsection{Biased Transmitter Cost}
Based on the arguments in Section~\ref{sec:biasedCost}, the related parameters can be found as follows:
	\begin{align*}
	\tau={\pi_0\over\pi_1} \,,\; k_0=\sqrt{\pi_0\pi_1}(2\alpha-1)\,,\;
	k_1=\sqrt{\pi_0\pi_1}(2\alpha-1)\,.
	\end{align*}
	Then, $\ln\tau\;(k_0-k_1)=0$ and $k_0+k_1=2\sqrt{\pi_0\pi_1}(2\alpha-1)$; hence, either Case-4 or Case-6 of Theorem~\ref{thm:stackelbergCases} applies. Namely, if $\alpha<1/2$ (Case-4 of Theorem~\ref{thm:stackelbergCases} applies), the transmitter chooses $S_0=S_1$ to minimize $d$ and the equilibrium is non-informative; i.e., he does not send any meaningful information to the transmitter and the receiver considers only the priors. If $\alpha=1/2$, the transmitter has no control on his Bayes risk, hence the  equilibrium is non-informative. Otherwise; i.e., if $\alpha>1/2$ (Case-6 of Theorem~\ref{thm:stackelbergCases} applies), the equilibrium is always informative. In other words, if $\alpha>1/2$, the players act like a team. As it can be seen, the informativeness of the equilibrium depends on $\alpha=\mathsf{Pr}(b=0)$, the probability that the Bayes risks of the transmitter and the receiver are aligned. 

\section{NASH GAME ANALYSIS}

Under the Nash assumption, the transmitter chooses optimal signals $\mathcal{S}=\{S_0,S_1\}$ to minimize $r^t(\mathcal{S},\delta)$, and the receiver chooses optimal decision rule $\delta$ to minimize $r^r(\mathcal{S},\delta)$ simultaneously. In this Nash setup, the transmitter and the receiver do not need to know the priors and the costs of each other; they need to know only their own priors and costs while calculating the best response to a given action of other player. Further, there is no commitment between the transmitter and the receiver. Due to this difference, the equilibrium structure and robustness properties of the Nash equilibrium show significant differences from the ones in the Stackelberg equilibrium, as stated in the following.

In the analysis, we assume deterministic policies for the transmitter and receiver, and we restrict the receiver to use only the single-threshold rules. Although a single-threshold rule is sub-optimal for the receiver in general, it is always optimal for Gaussian densities, and always optimal for uni-modal densities under the maximum likelihood decision rule \cite{Poor, MuratA}.

\subsection{Equilibrium Solutions}
Under the Nash assumption, the equilibrium structure of the binary signaling game can be characterized as follows:
\begin{thm}
	Let $\tau\triangleq{\pi_0^r (C^r_{10}-C^r_{00}) \over \pi_1^r (C^r_{01}-C^r_{11})}$ and $\zeta\triangleq\text{sgn}(C^r_{01}-C^r_{11})$, $\xi_0 \triangleq {C^t_{10}-C^t_{00}\over C^r_{10}-C^r_{00}}$, and $\xi_1 \triangleq {C^t_{01}-C^t_{11}\over C^r_{01}-C^r_{11}}$. If $\tau\leq0$ or $\tau=\infty$, then the Nash equilibrium of the binary signaling game is non-informative. Otherwise; i.e., if $0<\tau<\infty$, the Nash equilibrium structure is as depicted in Table~\ref{table:nashSummary}.
	\begin{table*}[ht]
		\centering
		\caption{Nash equilibrium analysis for $0<\tau<\infty$.}
		\label{table:nashSummary}
		\begin{adjustbox}{max width=\textwidth}
			\begin{tabular}{|c|c|c|c|}
				\hline
				& \boldmath$\xi_0>0$ & \boldmath$\xi_0=0$ & \boldmath$\xi_0<0$ \\ \hline
				\boldmath$\xi_1>0$	& unique informative equilibrium & non-informative equilibrium & {$\!\begin{aligned}
					P_0>P_1&\Rightarrow \text{ non-informative equilibrium} \\
					P_0=P_1&\Rightarrow \text{ non-informative equilibrium} \\
					P_0<P_1&\Rightarrow \text{ unique informative equilibrium}
					\end{aligned}$} \\ \hline
				\boldmath$\xi_1=0$	& non-informative equilibrium & non-informative equilibrium & non-informative equilibrium \\ \hline
				\boldmath$\xi_1<0$	& {$\!\begin{aligned}
					P_0>P_1&\Rightarrow \text{ unique informative equilibrium} \\
					P_0=P_1&\Rightarrow \text{ non-informative equilibrium} \\
					P_0<P_1&\Rightarrow \text{ non-informative equilibrium}
					\end{aligned}$} & non-informative equilibrium & non-informative equilibrium \\ \hline
			\end{tabular}
		\end{adjustbox}
	\end{table*}
	\label{thm:nashCases}
\end{thm}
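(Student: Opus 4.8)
The plan is to reduce the fixed-point problem to two decoupled best-response computations and then impose consistency. First I would dispose of the case $\tau\le 0$ or $\tau=\infty$ exactly as in the proof of Theorem~\ref{thm:teamCase}: by Table~\ref{table:receiverCases} the receiver's optimal rule is a constant decision that ignores $y$, so the transmitter's signals are irrelevant and every equilibrium is non-informative. Hence assume $0<\tau<\infty$, so that $\text{sgn}(C^r_{10}-C^r_{00})=\text{sgn}(C^r_{01}-C^r_{11})=\zeta$ and the receiver's best response to any separated pair $\mathcal{S}$ is the single-threshold LRT \eqref{eq:receiverLRT}, deciding $\mathcal{H}_1$ on $\{y>t\}$ when $\zeta(S_1-S_0)>0$ and on $\{y<t\}$ when $\zeta(S_1-S_0)<0$, for a threshold $t$ determined by $\mathcal{S}$.

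The key structural point, and what distinguishes the Nash analysis from Theorems~\ref{thm:teamCase} and~\ref{thm:stackelbergCases}, is that the transmitter now best-responds to a \emph{fixed} decision rule: he does not anticipate how moving $\mathcal{S}$ would move $t$. Thus for a fixed threshold rule with direction ``$\mathcal{H}_1$ on $\{y>t\}$'' I would write, using $\mathsf{P}_{10}=\mathcal{Q}((t-S_0)/\sigma)$ and $\mathsf{P}_{01}=\mathcal{Q}((S_1-t)/\sigma)$, the variable part of the transmitter's risk \eqref{eq:updatedBayesRisk} as $\pi_0^t(C^t_{10}-C^t_{00})\mathsf{P}_{10}+\pi_1^t(C^t_{01}-C^t_{11})\mathsf{P}_{01}$, which \emph{separates} into a function of $S_0$ alone and a function of $S_1$ alone. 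Each summand is monotone in its variable ($\mathcal{Q}((t-S_0)/\sigma)$ increasing in $S_0$, $\mathcal{Q}((S_1-t)/\sigma)$ decreasing in $S_1$), so the optimizer sits at a box corner: $S_0^*=-\text{sgn}(C^t_{10}-C^t_{00})\sqrt{P_0}$ and $S_1^*=\text{sgn}(C^t_{01}-C^t_{11})\sqrt{P_1}$ (with the mirror-image corners for the opposite threshold direction). Writing $\text{sgn}(C^t_{10}-C^t_{00})=\zeta\,\text{sgn}(\xi_0)$ and $\text{sgn}(C^t_{01}-C^t_{11})=\zeta\,\text{sgn}(\xi_1)$ then expresses these corners through $\xi_0,\xi_1,\zeta$.

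Next I would impose the fixed-point condition: the corner signals just found must induce exactly the threshold direction they were computed under, i.e. $\zeta(S_1^*-S_0^*)>0$. Substituting the corners and using $\zeta^2=1$, this collapses to the single clean inequality $\text{sgn}(\xi_0)\sqrt{P_0}+\text{sgn}(\xi_1)\sqrt{P_1}>0$, which I claim is precisely the condition for an informative Nash equilibrium. Reading it off case by case reproduces Table~\ref{table:nashSummary}: it always holds when $\xi_0,\xi_1>0$ and never when $\xi_0,\xi_1<0$; in the mixed cases it reduces to a comparison of $P_0$ and $P_1$ (with the knife-edge $P_0=P_1$ forcing $S_0^*=S_1^*$, i.e. collapse to non-informative). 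When the inequality fails, neither threshold direction is self-consistent, so no informative fixed point exists and only the babbling profiles survive; their existence and essential uniqueness follow as in Remark~\ref{rem:teamUnique}(ii), with the receiver using a constant rule and the transmitter indifferent. When the inequality holds strictly and both cost gaps are nonzero, the two mirror corners are the only informative equilibria and yield identical Bayes risks, giving the essentially unique informative equilibrium asserted in the table.

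The step I expect to be most delicate is the treatment of the degenerate columns and rows $\xi_0=0$ or $\xi_1=0$. There the corresponding cost gap vanishes, the transmitter's risk is independent of that signal, and the separable argument no longer pins the signal to a corner: the best-response set contains a whole interval, so informative profiles are not forced and a non-informative profile is always available. I would therefore argue that in these cases no \emph{unique} informative equilibrium exists and that a non-informative equilibrium is the robust outcome, which is how the corresponding cells of Table~\ref{table:nashSummary} are to be read. The remaining care is purely in the sign bookkeeping linking $(\zeta,\text{sgn}(\xi_0),\text{sgn}(\xi_1))$ to the corner choices and to the induced threshold direction, and in checking that each candidate profile is a genuine two-sided best response rather than merely a transmitter optimum.
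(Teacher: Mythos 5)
Your proposal is correct and takes essentially the same route as the paper's own proof: the receiver's best response is the single-threshold LRT, the transmitter's best response to a fixed threshold rule lands on a corner of the power box, and the consistency (fixed-point) requirement collapses to exactly the paper's condition $\text{sgn}(\xi_0)\sqrt{P_0}+\text{sgn}(\xi_1)\sqrt{P_1}>0$ in \eqref{eq:nashRecursion}, whose sign/power case analysis, together with the babbling profiles at $a=0$ and the degenerate zero-cost-gap cases, reproduces Table~\ref{table:nashSummary}. The only differences are cosmetic (parametrizing the rule by a threshold and direction rather than by $(a,\eta)$).
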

\begin{proof}
Let the transmitter choose any signals $\mathcal{S}=\{S_0,S_1\}$. Assuming nonzero priors $\pi_0^t, \pi_0^r, \pi_1^t$ and $\pi_1^r$, the optimal decision for the receiver is given by \eqref{eq:receiverLRT}. By applying the same extreme case analysis as in the proof of Theorem~\ref{thm:teamCase}, the equilibrium is non-informative if $\tau\leq0$ or $\tau=\infty$ (see Table~\ref{table:receiverCases}); thus, $0<\tau<\infty$ can be assumed. 

Now assume that the receiver applies a single-threshold rule; i.e., $\delta : \Bigg\{ a y \overset{\mathcal{H}_1}{\underset{\mathcal{H}_0}{\gtreqless}}\eta$ where $a\in\mathbb{R}$ and $\eta\in\mathbb{R}$. 
\begin{remark}\label{rem:unstableNash}
Note that for $a=0$, the receiver chooses either always $\mathcal{H}_0$ or always $\mathcal{H}_1$ without considering the value of $y$, which implies a non-informative equilibrium. Therefore, $S_0^*=S_1^*$, $a^*=0$, and $\eta^*=\zeta(\tau-1)$ (i.e., the decision rule of the receiver is $\delta^* : \zeta\overset{\mathcal{H}_1}{\underset{\mathcal{H}_0}{\gtreqless}} \zeta\tau$) constitute a non-informative equilibrium regardless of the values of the priors and costs of the players.
\end{remark} 
Thus, due to the remark above, it can be assumed that $a\neq0$ holds. Since $aY \sim\mathcal{N} \Big(a S_i,a^2\sigma^2\Big)$ under $\mathcal{H}_i$ for $i\in\{0,1\}$,
the conditional probabilities are $\mathsf{P}_{10}=\mathcal{Q}\left(\eta-aS_0\over|a|\sigma\right)$ and $\mathsf{P}_{01}=\mathcal{Q}\left(-{\eta-aS_1\over|a|\sigma}\right)$. Then, the Bayes risk of the transmitter becomes
\begin{align}
r^t(\mathcal{S},\delta) = \pi_0^t C^t_{00} &+ \pi_1^t C^t_{11} + \pi_0^t (C^t_{10}-C^t_{00})\mathcal{Q}\left(\eta-aS_0\over|a|\sigma\right)+ \pi_1^t (C^t_{01}-C^t_{11})\mathcal{Q}\left(-{\eta-aS_1\over|a|\sigma}\right) \,.
\label{eq:transmitterRiskNash}
\end{align}
Since the power constraints are $|S_0|^2 \leq P_0$ and $|S_1|^2 \leq P_1$, the signals $S_0$ and $S_1$ can be regarded as independent, and the optimum signals $\mathcal{S}=\{S_0,S_1\}$ can be found by analyzing the derivative of the Bayes risk of the transmitter with respect to the signals:
\begin{align*}
{\partial\,r^t(\mathcal{S},\delta)\over\partial\,S_i}&= {\text{sgn}(a)\over\sqrt{2\pi}\sigma}\pi_i^t (C^t_{1i}-C^t_{0i})\exp\left\{-{1\over2}\left(\eta-aS_i\over|a|\sigma\right)^2\right\}\,.
\end{align*}
Then, for $i\in\{0,1\}$, the following cases hold:
\begin{enumerate}
	\item \underline{$C^t_{1i}=C^t_{0i}$} $\Rightarrow$ $S_i$ has no effect on the Bayes risk of the transmitter.
	\item \underline{$C^t_{1i}\neq C^t_{0i}$} $\Rightarrow$ $r^t(\mathcal{S},\delta)$ is a decreasing (increasing) function of $S_i$ if $a(C^t_{1i}-C^t_{0i})$ is negative (positive); thus the transmitter chooses the optimal signal levels as $S_0=-\text{sgn}(a)\text{sgn}(C^t_{10}-C^t_{00})\sqrt{P_0}$ and $S_1=\text{sgn}(a)\text{sgn}(C^t_{01}-C^t_{11})\sqrt{P_1}$. 
\end{enumerate}

By using the expressions above, the cases can be listed as follows:
\begin{enumerate}
	\item \underline{$\tau\leq0$ or $\tau=\infty$} $\Rightarrow$ The equilibrium is non-informative.
	\item \underline{$C^t_{10}=C^t_{00}$ (and/or $C^t_{01}=C^t_{11}$)} $\Rightarrow$ $S_0$ (and/or $S_1$) has no effect on the Bayes risk of the transmitter; thus it can arbitrarily be chosen by the transmitter. In this case, if the transmitter chooses $S_0=S_1$; i.e., he does not send anything useful to the receiver, and the receiver applies the decision rule $\delta : \zeta \overset{\mathcal{H}_1}{\underset{\mathcal{H}_0}{\gtreqless}} \zeta\tau$; i.e., he only considers the prior information (totally discards the information sent by the transmitter). Therefore, there exists a non-informative equilibrium.
	\item Notice that, since $0<\tau<\infty$ is assumed, $\zeta=\text{sgn}(C^r_{01}-C^r_{11})=\text{sgn}(C^r_{10}-C^r_{00})$ is obtained. Now, assume that the decision rule of the receiver is $\delta : \Bigg\{ a y \overset{\mathcal{H}_1}{\underset{\mathcal{H}_0}{\gtreqless}}\eta$. Then, the transmitter selects $S_0=-\text{sgn}(a)\text{sgn}(C^t_{10}-C^t_{00})\sqrt{P_0}$ and $S_1=\text{sgn}(a)\text{sgn}(C^t_{01}-C^t_{11})\sqrt{P_1}$ as optimal signals, and the decision rule becomes \eqref{eq:receiverLRT}. By combining the best responses of the transmitter and the receiver,
	\begin{align}
	a&=\zeta(S_1-S_0)=\zeta\text{sgn}(a)\left(\text{sgn}(C^t_{01}-C^t_{11})\sqrt{P_1}+\text{sgn}(C^t_{10}-C^t_{00})\sqrt{P_0}\right) \nn\\
	\Rightarrow & \text{sgn}(a) = \zeta\text{sgn}(a) \text{sgn}\left(\text{sgn}(C^t_{01}-C^t_{11})\sqrt{P_1}+\text{sgn}(C^t_{10}-C^t_{00})\sqrt{P_0}\right) \nn\\
	\Rightarrow & \underbrace{\text{sgn}(C^t_{01}-C^t_{11})\over\text{sgn}(C^r_{01}-C^r_{11})}_{=\text{sgn}(\xi_1)}\sqrt{P_1}+\underbrace{\text{sgn}(C^t_{10}-C^t_{00})\over\text{sgn}(C^r_{10}-C^r_{00})}_{=\text{sgn}(\xi_0)}\sqrt{P_0} > 0
	\label{eq:nashRecursion}
	\end{align} 
	is obtained. Here, unless \eqref{eq:nashRecursion} is satisfied, the best responses of the transmitter and the receiver cannot match each other. Then, there are four possible cases:
	\begin{enumerate}
		\item \underline{$\xi_0<0$ and $\xi_1<0$} $\Rightarrow$ \eqref{eq:nashRecursion} cannot be satisfied; thus, the best responses of the transmitter and the receiver do not match each other, which results in the absence of a Nash equilibrium for $a\neq0$. However, as discussed in Remark~\ref{rem:unstableNash}, $S_0^*=S_1^*$, $a^*=0$, and $\eta^*=\zeta(\tau-1)$ always constitute a non-informative equilibrium.
		\item \underline{$\xi_0<0$ and $\xi_1>0$} $\Rightarrow$ \eqref{eq:nashRecursion} is satisfied only when $\sqrt{P_1}>\sqrt{P_0}$. If $\sqrt{P_1}<\sqrt{P_0}$, \eqref{eq:nashRecursion} cannot be satisfied and the best responses of the transmitter and the receiver do not match each other, which results in the absence of a Nash equilibrium for $a\neq0$. However, due to Remark~\ref{rem:unstableNash}, for $a=0$, there always exist non-informative equilibria. If $\sqrt{P_1}=\sqrt{P_0}$ (which implies $S_0=S_1$), then the receiver applies $\delta : \Bigg\{ \zeta \overset{\mathcal{H}_1}{\underset{\mathcal{H}_0}{\gtreqless}}\zeta\tau$ as in Case-2, and the receiver chooses either always $\mathcal{H}_0$ or always $\mathcal{H}_1$. Hence, there exists a non-informative equilibrium; i.e., the transmitter sends dummy signals, and the receiver makes a decision without considering the transmitted signals.   
		\item \underline{$\xi_0>0$ and $\xi_1<0$} $\Rightarrow$ \eqref{eq:nashRecursion} is satisfied only when $\sqrt{P_0}>\sqrt{P_1}$. If $\sqrt{P_0}<\sqrt{P_1}$, \eqref{eq:nashRecursion} cannot be satisfied and the best responses of the transmitter and the receiver do not match each other, which results in the absence of a Nash equilibrium for $a\neq0$. However, due to Remark~\ref{rem:unstableNash}, for $a=0$, there always exist non-informative equilibria. If $\sqrt{P_0}=\sqrt{P_1}$ (which implies $S_0=S_1$), then the receiver applies $\delta : \Bigg\{ \zeta \overset{\mathcal{H}_1}{\underset{\mathcal{H}_0}{\gtreqless}}\zeta\tau$ as in Case-2, and the equilibrium is non-informative.   
		\item \underline{$\xi_0>0$ and $\xi_1>0$} $\Rightarrow$ \eqref{eq:nashRecursion} is always satisfied; thus, the consistency is established, and there exists an informative equilibrium.
	\end{enumerate}
\end{enumerate}
\end{proof}

As it can be deduced from Table~\ref{table:nashSummary}, as the costs related to both hypotheses are aligned\footnote{$\xi_i$ is the indicator that the transmitter and the receiver have similar preferences about hypothesis $\mathcal{H}_i$; i.e., if $\xi_i>0$, then both the transmitter and the receiver aim to transmit and decode the hypothesis $\mathcal{H}_i$ correctly (or incorrectly). If $\xi_i<0$, then the transmitter and the receiver have conflicting goals over hypothesis $\mathcal{H}_i$; i.e., one of them tries to achieve the correct transmission and decoding, whereas the goal of the other player is the opposite.} for the transmitter and the receiver, the Nash equilibrium is informative. If the power limit corresponding to the hypothesis that has aligned costs for the transmitter and receiver is greater than the power limit of the other hypothesis, again, there exists an informative equilibrium. For the other cases, there may exist non-informative equilibrium.

\begin{remark}\label{rem:nashEq}
	\begin{itemize}
		\item[(i)] We emphasize that, under the Nash formulation, while calculating the best responses, the transmitter and the receiver do not need to know the priors and the costs of each other. In particular,
		\begin{itemize}
			\item for a given decision rule of the receiver $\delta : \Bigg\{ a y \overset{\mathcal{H}_1}{\underset{\mathcal{H}_0}{\gtreqless}}\eta$, the best response of the transmitter is $S_0^{\mathrm{BR}}=-\text{sgn}(a)\text{sgn}(C^t_{10}-C^t_{00})\sqrt{P_0}$ and $S_1^{\mathrm{BR}}=\text{sgn}(a)\text{sgn}(C^t_{01}-C^t_{11})\sqrt{P_1}$.
			\item similarly, for a given signal design $S_0$ and $S_1$ of the transmitter, the best response of the receiver is $a^{\mathrm{BR}}=\zeta(S_1-S_0)$ and $\eta^{\mathrm{BR}}=\zeta\left(\sigma^2\ln(\tau)+{(S_1)^2-(S_0)^2\over2}\right)$.
		\end{itemize}
		\item[(ii)] As shown in Theorem~\ref{thm:nashCases}, at the informative Nash equilibrium, the transmitter selects $S_0^*=-\text{sgn}(a^*)\text{sgn}(C^t_{10}-C^t_{00})\sqrt{P_0}$ and $S_1^*=\text{sgn}(a^*)\text{sgn}(C^t_{01}-C^t_{11})\sqrt{P_1}$, and the decision rule of the receiver is $\delta^* : \Bigg\{ a^* y \overset{\mathcal{H}_1}{\underset{\mathcal{H}_0}{\gtreqless}}\eta^*$, where $a^*=\zeta(S_1^*-S_0^*)$ and $\eta^*=\zeta\left(\sigma^2\ln(\tau)+{(S_1^*)^2-(S_0^*)^2\over2}\right)$. Similar to the team and Stackelberg setup analyses, the informative equilibrium is essentially unique in the Nash case, too; i.e., if $(S_0^*,S_1^*,a^*,\eta^*)$ is an equilibrium point, then $(-S_0^*,-S_1^*,-a^*,\eta^*)$ is another equilibrium point, and they both result in the same Bayes risks for the transmitter and the receiver.
		\item[(iii)] For the non-informative equilibrium, as discussed in Remark~\ref{rem:unstableNash}, the optimal strategies of the transmitter and the receiver are determined by $S_0^*=S_1^*$, $a^*=0$, and $\eta^*=\zeta(\tau-1)$; which results in essentially unique equilibria (see Remark~\ref{rem:teamUnique}-(ii)).
	\end{itemize}
\end{remark}

Even though the transmitter and the receiver do not know the private parameters of each other, they can achieve (converge) to an equilibrium. Note that, due to Remark~\ref{rem:nashEq}-(i), for any arbitrary receiver strategy $(a,\eta)$, the best response of the transmitter $(S_0^{\mathrm{BR}},S_1^{\mathrm{BR}})$ is one of the four possibilities: $(\sqrt{P_0},\sqrt{P_1})$, $(-\sqrt{P_0},\sqrt{P_1})$, $(\sqrt{P_0},-\sqrt{P_1})$, or $(-\sqrt{P_0},-\sqrt{P_1})$. Then, the corresponding best responses of the receiver are characterized by $(a^{\mathrm{BR}}_1,\eta^{\mathrm{BR}})$, $(a^{\mathrm{BR}}_2,\eta^{\mathrm{BR}})$, $(-a^{\mathrm{BR}}_2,\eta^{\mathrm{BR}})$, or $(-a^{\mathrm{BR}}_1,\eta^{\mathrm{BR}})$, respectively, where $a^{\mathrm{BR}}_1\triangleq\zeta(\sqrt{P_1}-\sqrt{P_0})$, $a^{\mathrm{BR}}_2\triangleq\zeta(\sqrt{P_1}+\sqrt{P_0})$, and $\eta^{\mathrm{BR}}=\zeta\left(\sigma^2\ln(\tau)+{P_1-P_0\over2}\right)$. By continuing these iterations, the best responses of the transmitter and the receiver can be combined and \eqref{eq:nashRecursion} is obtained. If their private parameters (priors and costs) satisfy the condition of the unique informative equilibrium in Table IV, their best responses match each other, so the best-response dynamics converges to an equilibrium (e.g., $(a,\eta)\rightarrow(\sqrt{P_0},\sqrt{P_1})\rightarrow(a^{\mathrm{BR}}_1,\eta^{\mathrm{BR}})\rightarrow(\sqrt{P_0},\sqrt{P_1})\rightarrow\cdots$). Otherwise, the optimal strategies (best responses) of the transmitter and the receiver oscillate between two best responses; e.g., $(a,\eta)\rightarrow(\sqrt{P_0},\sqrt{P_1})\rightarrow(a^{\mathrm{BR}}_1,\eta^{\mathrm{BR}})\rightarrow(-\sqrt{P_0},-\sqrt{P_1})\rightarrow(-a^{\mathrm{BR}}_1,\eta^{\mathrm{BR}})\rightarrow(\sqrt{P_0},\sqrt{P_1})\rightarrow\cdots$. Then, they deduce that there exist only non-informative equilibria, in which $S_0^*=S_1^*$, $a^*=0$, and $\eta^*=\zeta(\tau-1)$ (see Remark~\ref{rem:nashEq}-(iii)).

Note that, when $a\neq0$, the misalignment between the costs can even induce a scenario, in which there exists no equilibrium. For $a\neq0$, the main reason for the absence of a non-informative (babbling) equilibrium under the Nash assumption is that in the binary signaling game setup, the receiver is forced to make a decision. Using only the prior information, the receiver always chooses one of the hypothesis. By knowing this, the transmitter can manipulate his signaling strategy for his own benefit. However, after this manipulation, the receiver no longer keeps his decision rule the same; namely, the best response of the receiver alters based on the signaling strategy of the transmitter, which entails another change of the best response of the transmitter. Due to such an infinite recursion, the optimal policies of the transmitter and the receiver keep changing, and thus, there does not exist a pure Nash equilibrium unless $a=0$; i.e., due to Remark~\ref{rem:unstableNash}, there always exist non-informative equilibria with $S_0^*=S_1^*$, $a^*=0$, and $\eta^*=\zeta(\tau-1)$.

\subsection{Continuity and Robustness to Perturbations around the Team Setup} \label{sec:nashContinuity}
Similar to that in Section~\ref{sec:ConvRobustStackelberg} for the Stackelberg setup, the effects of small perturbations in priors and costs on equilibrium values around the team setup are investigated for the Nash setup as follows:

Define the perturbation around the team setup as $\vepsilon=\{\epsilon_{\pi0},\epsilon_{\pi1},\epsilon_{00},\epsilon_{01},\epsilon_{10},\epsilon_{11}\}\in\mathbb{R}^6$ such that $\pi_i^t=\pi_i^r+\epsilon_{\pi i}$ and $C_{ji}^t=C_{ji}^r+\epsilon_{ji}$ for $i,j\in\{0,1\}$  (note that the transmitter parameters are perturbed around the receiver parameters which are assumed to be fixed). Then, for $0<\tau<\infty$, at the point of identical priors and costs, small perturbations in priors and costs imply $\xi_0 = {C^r_{10}-C^r_{00}+\epsilon_{10}-\epsilon_{00}\over C^r_{10}-C^r_{00}}$ and $\xi_1 = {C^r_{01}-C^r_{11}+\epsilon_{01}-\epsilon_{11}\over C^r_{01}-C^r_{11}}$. As it can be seen, the Nash equilibrium is not affected by small perturbations in priors. Further, since $\xi_0=\xi_1=1$ at the point of identical priors and costs for $0<\tau<\infty$, as long as the perturbation $\vepsilon$ is chosen such that $\Big\lvert{\epsilon_{10}-\epsilon_{00}\over C^r_{10}-C^r_{00}}\Big\rvert<1$ and $\Big\lvert{\epsilon_{01}-\epsilon_{11}\over C^r_{01}-C^r_{11}}\Big\rvert<1$, we always obtain positive $\xi_0$ and $\xi_1$ in Table~\ref{table:nashSummary}. Thus, under the Nash assumption, the equilibrium behavior is robust to small perturbations in both priors and costs.  

For the continuity analysis, first consider a non-informative equilibrium; i.e., the policies are $S_0^*=S_1^*$, $a^*=0$, and $\eta^*=\zeta(\tau-1)$, which are independent of the values of the priors and costs of the players. Thus, consider when $a\neq0$; i.e., an informative equilibrium: if the priors and costs are perturbed around the team setup, $S_0=-\text{sgn}(a)\text{sgn}(C^r_{10}-C^r_{00}+\epsilon_{10}-\epsilon_{00})\sqrt{P_0}$ and $S_1=\text{sgn}(a)\text{sgn}(C^r_{01}-C^r_{11}+\epsilon_{01}-\epsilon_{11})\sqrt{P_1}$ are obtained. As long as the perturbation $\vepsilon$ is chosen such that $\Big\lvert{\epsilon_{10}-\epsilon_{00}\over C^r_{10}-C^r_{00}}\Big\rvert<1$ and $\Big\lvert{\epsilon_{01}-\epsilon_{11}\over C^r_{01}-C^r_{11}}\Big\rvert<1$, the changes in $\eta$, $S_0$ and $S_1$ are continuous with respect to perturbations; actually, the values of the equilibrium parameters remain constant; i.e., either $(S_0^*,S_1^*,a^*,\eta^*)=\left(-\zeta \sqrt{P_0},\zeta \sqrt{P_1}, (\sqrt{P_0}+\sqrt{P_1}),\zeta\left(\sigma^2\ln(\tau)+{S_1^2-S_0^2\over2}\right)\right)$ or the essentially equivalent one $(S_0^*,S_1^*,a^*,\eta^*)=\left(\zeta \sqrt{P_0},-\zeta \sqrt{P_1}, -(\sqrt{P_0}+\sqrt{P_1}),\zeta\left(\sigma^2\ln(\tau)+{S_1^2-S_0^2\over2}\right)\right)$ holds.  Thus, the policies are continuous with respect to small perturbations around the point of identical priors and costs.

\subsection{Application to the Motivating Examples}
\subsubsection{Subjective Priors}
The related parameters are $\tau={\pi_0^r (C_{10}-C_{00}) \over \pi_1^r (C_{01}-C_{11})}$, $\xi_0 =1$, and $\xi_1=1$. Thus, if $\tau<0$ or $\tau=\infty$, the equilibrium is non-informative; otherwise, there always exists a unique informative equilibrium. 

\subsubsection{Biased Transmitter Cost}
Based on the arguments in Section~\ref{sec:biasedCost}, the related parameters can be found as follows:
	\begin{align*}
	C^t_{01}&=C^t_{10}=\alpha \text{ and } C^t_{00}=C^t_{11}=1-\alpha \,,\\
	C^r_{01}&=C^r_{10}=1 \text{ and } C^r_{00}=C^r_{11}=0 \,,\\
	\tau&={\pi_0 (C^r_{10}-C^r_{00}) \over \pi_1 (C^r_{01}-C^r_{11})}={\pi_0\over\pi_1} \,,\\
	\xi_0 &= {C^t_{10}-C^t_{00}\over C^r_{10}-C^r_{00}} =2\alpha-1 \,,\\
	\xi_1 &= {C^t_{01}-C^t_{11}\over C^r_{01}-C^r_{11}}=2\alpha-1\,.
	\end{align*}
	If $\alpha>1/2$ (Case-3-d of Theorem~\ref{thm:nashCases} applies), the players act like a team and the equilibrium is informative. If $\alpha=1/2$ (Case-2 of Theorem~\ref{thm:nashCases} applies), the equilibrium is non-informative. Otherwise; i.e., if $\alpha<1/2$ (Case-3-a of Theorem~\ref{thm:nashCases} applies), there exist non-informative equilibria. As it can be seen, the existence of the equilibrium depends on $\alpha=\mathsf{Pr}(b=0)$, the probability that the Bayes risks of the transmitter and the receiver are aligned.

\section{EXTENSION to the MULTI-DIMENSIONAL CASE}

When the transmitter sends a multi-dimensional signal over a multi-dimensional channel, or the receiver takes multiple samples from the observed waveform, the scalar analysis considered heretofore is not applicable anymore; thus, the vector case can be investigated. In this direction, the binary hypothesis-testing problem aforementioned can be modified as
\begin{align*}
\mathcal{H}_0 : \vY = \vS_0 + \vN \;,\nn\\
\mathcal{H}_1 : \vY = \vS_1 + \vN \;,
\end{align*}
where $\vY$ is the observation (measurement) vector that belongs to the observation set $\Gamma=\mathbb{R}^n$, $\vS_0$ and $\vS_1$ denote the deterministic signals under hypothesis $\mathcal{H}_0$ and hypothesis $\mathcal{H}_1$, such that $\mathbb{S}\triangleq\{\mathcal{S}:\lVert \vS_0 \rVert ^2 \leq P_0 \,,\; \lVert \vS_1 \rVert ^2 \leq P_1\}$, respectively, and $\vN$ represents a zero-mean Gaussian noise vector with the positive definite covariance matrix $\Sigma$; i.e., $\vN \sim \mathcal{N} (\mathbf{0},\Sigma)$. All the other parameters ($\pi_i^k$ and $C^k_{ji}$ for $i,j\in\{0,1\}$ and $k\in\{t,r\}$) and their definitions remain unchanged.

\subsection{Team Setup Analysis} \label{sec:vecTeam}

\begin{thm}
	Theorem~\ref{thm:teamCase} also holds for the vector case: if $0<\tau<\infty$, the team solution is always informative; otherwise, there exist only non-informative equilibria.
	\label{thm:teamCaseVector}
\end{thm}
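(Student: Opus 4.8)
The plan is to reduce the vector problem to the scalar analysis of Theorem~\ref{thm:teamCase} by passing to a one-dimensional sufficient statistic. First I would note that the receiver's case analysis summarized in Table~\ref{table:receiverCases} depends only on $\tau$ and the costs, not on the dimension $n$; hence the non-informative cases $\tau\leq0$ and $\tau=\infty$ carry over verbatim, and it suffices to treat $0<\tau<\infty$.

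Next I would write the log-likelihood ratio for the two Gaussians $\mathcal{N}(\vS_i,\Sigma)$ and observe that it is affine in $\vy$:
\begin{align*}
\ln\frac{p_1(\vy)}{p_0(\vy)} = (\vS_1-\vS_0)^T\Sigma^{-1}\vy - \tfrac{1}{2}\left(\vS_1^T\Sigma^{-1}\vS_1 - \vS_0^T\Sigma^{-1}\vS_0\right)\,.
\end{align*}
Thus the LRT in \eqref{eq:optimalReceiverDecision} collapses to a scalar threshold test on the sufficient statistic $T\triangleq(\vS_1-\vS_0)^T\Sigma^{-1}\vY$. Under $\mathcal{H}_i$ one has $T\sim\mathcal{N}\big((\vS_1-\vS_0)^T\Sigma^{-1}\vS_i,\,d^2\big)$, with the Mahalanobis distance $d\triangleq\sqrt{(\vS_1-\vS_0)^T\Sigma^{-1}(\vS_1-\vS_0)}$ playing exactly the role that $|S_1-S_0|/\sigma$ played in the scalar case (and reducing to it when $\Sigma=\sigma^2$, $n=1$).

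The key step is then to compute the conditional probabilities and verify that they coincide in form with the scalar expressions. A short calculation shows that the gap between the scalar threshold and the conditional mean under $\mathcal{H}_0$ equals $\ln\tau + d^2/2$, so that (with $\zeta$ as before) $\mathsf{P}_{10}=\mathcal{Q}\big(\zeta(\ln\tau/d + d/2)\big)$ and, symmetrically, $\mathsf{P}_{01}=\mathcal{Q}\big(\zeta(-\ln\tau/d + d/2)\big)$, which are precisely \eqref{eq:condProbs}. Consequently the Bayes risk in \eqref{eq:updatedBayesRisk} is the same function of $d$ as in the scalar problem, and the derivative computation \eqref{eq:transmitterDerivative} together with its team-case specialization applies verbatim, showing that $r^t(\mathcal{S},\delta)$ is strictly decreasing in $d$ (the factor $\sqrt{\pi_0\pi_1(C_{10}-C_{00})(C_{01}-C_{11})}$ being strictly positive for $0<\tau<\infty$) and that the transmitter's optimal behavior is to maximize $d$.

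Finally I would conclude informativeness. Since $P_0,P_1>0$, the transmitter can choose $\vS_0\neq\vS_1$ within the power constraints, so the maximal achievable Mahalanobis distance $d_{\max}$ over $\mathbb{S}$ is strictly positive and the optimal $d^*=d_{\max}>0$ yields an informative equilibrium. I expect the only genuinely new point, relative to the scalar case, to be the geometry of this last maximization: maximizing $(\vS_1-\vS_0)^T\Sigma^{-1}(\vS_1-\vS_0)$ under $\lVert\vS_0\rVert^2\leq P_0$ and $\lVert\vS_1\rVert^2\leq P_1$ now aligns the signals (with opposite signs) along the eigenvector of $\Sigma^{-1}$ of largest eigenvalue, rather than simply taking opposite scalar signs. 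This changes the numerical value of $d_{\max}$ but not its positivity, so the dichotomy of Theorem~\ref{thm:teamCase}---informative for $0<\tau<\infty$, non-informative otherwise---is preserved.
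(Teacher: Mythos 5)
Your proposal is correct and follows essentially the same route as the paper's proof: reduce the receiver's LRT to a scalar threshold test on the sufficient statistic $(\vS_1-\vS_0)^T\Sigma^{-1}\vY$, verify that $\mathsf{P}_{10}$ and $\mathsf{P}_{01}$ coincide with the scalar expressions in \eqref{eq:condProbs} with $d$ the Mahalanobis distance, invoke the scalar derivative analysis to conclude the transmitter maximizes $d$, and identify $d_{\max}>0$ via signaling along the least-noisy eigendirection of $\Sigma$ (equivalently, the top eigendirection of $\Sigma^{-1}$, as you phrase it). No gaps to report.
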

\begin{proof}
Let the transmitter choose optimal signals $\mathcal{S}=\{\vS_0,\vS_1\}$. Then the measurements become $\mathcal{H}_i : \vY \sim \mathcal{N} (\vS_i,\Sigma)$ for $i\in\{0,1\}$. As in the scalar case in Theorem~\ref{thm:teamCase}, the equilibrium is non-informative for $\tau\leq0$ or $\tau=\infty$; hence, $0<\tau<\infty$ can be assumed. Similar to \eqref{eq:receiverLRT}, the optimal decision rule for the receiver is obtained by utilizing \eqref{eq:lrt} as
\begin{align}
\delta^*_{\vS_0,\vS_1} &: \Bigg\{ \zeta{ p_1(\vy)\over p_0(\vy)} \overset{\mathcal{H}_1}{\underset{\mathcal{H}_0}{\gtreqless}} \zeta{\pi_0^r (C^r_{10}-C^r_{00}) \over \pi_1^r (C^r_{01}-C^r_{11})} \triangleq \zeta\tau \nn\\
&:\Bigg\{ \zeta{ {1\over\sqrt{(2\pi)^n|\Sigma|}}\exp\left\{-{1\over 2}(\vy-\vS_1)^T\Sigma^{-1}(\vy-\vS_1)\right\} \over {1\over\sqrt{(2\pi)^n|\Sigma|}}\exp\left\{-{1\over 2}(\vy-\vS_0)^T\Sigma^{-1}(\vy-\vS_0)\right\}} \overset{\mathcal{H}_1}{\underset{\mathcal{H}_0}{\gtreqless}} \zeta\tau \nn\\
\begin{split}
&:\Bigg\{ \zeta(\vS_1-\vS_0)^T\Sigma^{-1}\vy \overset{\mathcal{H}_1}{\underset{\mathcal{H}_0}{\gtreqless}} \zeta\left(\ln(\tau)+{1\over2}(\vS_1-\vS_0)^T\Sigma^{-1}(\vS_1+\vS_0)\right)\;.
\label{eq:receiverLRTVector}
\end{split}
\end{align}
Since, under hypothesis $\mathcal{H}_i$, $\zeta(\vS_1-\vS_0)^T\Sigma^{-1}\vY\sim \mathcal{N} \left(\zeta(\vS_1-\vS_0)^T\Sigma^{-1}\vS_i,(\vS_1-\vS_0)^T\Sigma^{-1}(\vS_1-\vS_0)\right)$ for $i\in\{0,1\}$, by defining $d^2\triangleq(\vS_1-\vS_0)^T\Sigma^{-1}(\vS_1-\vS_0)$, the conditional probabilities can be written as follows:
	\begin{align}
	\mathsf{P}_{10}&=\mathcal{Q}\left(\zeta{\ln(\tau)+{1\over2}(\vS_1-\vS_0)^T\Sigma^{-1}(\vS_1+\vS_0-2\vS_0)\over\sqrt{(\vS_1-\vS_0)^T\Sigma^{-1}(\vS_1-\vS_0)}}\right)=\mathcal{Q}\left(\zeta\left({\ln(\tau)\over d}+{d\over2}\right)\right)\,, \nn\\
	\mathsf{P}_{01}&=1-\mathcal{Q}\left(\zeta{\ln(\tau)+{1\over2}(\vS_1-\vS_0)^T\Sigma^{-1}(\vS_1+\vS_0-2\vS_1)\over\sqrt{(\vS_1-\vS_0)^T\Sigma^{-1}(\vS_1-\vS_0)}}\right)=1-\mathcal{Q}\left(\zeta\left({\ln(\tau) \over d} -{d\over2}\right)\right)\nn\\
	&=\mathcal{Q}\left(\zeta\left(-{\ln(\tau)\over d}+{d\over2}\right)\right)\;.
	\label{eq:condProbsVector}
	\end{align}
	Notice that the conditional probabilities are the same in \eqref{eq:condProbs} and \eqref{eq:condProbsVector}; therefore, in the vector case, the equilibrium is always informative, and the transmitter always prefers the maximum distance similar to the scalar case. However, selecting optimal vector signals is not as trivial as in the scalar case; see \cite[pp. 61--63]{Poor} for details. Since the eigenvector with the largest (smallest) eigenvalue of $\Sigma$ corresponds to the direction, along which the noise is most (least) powerful, signaling in the least noisy direction results in the highest signal-to-noise power ratio for the system. Accordingly, the optimum signals are $\vS_0 = \pm \sqrt{P_0} {\vnu_{\min}\over\|\vnu_{\min}\|}$ and $\vS_1 = \mp \sqrt{P_1} {\vnu_{\min}\over\|\vnu_{\min}\|}$, which corresponds to $d_{\max}^2={(\sqrt{P_0}+\sqrt{P_1})^2\over\lambda_{\min}}$, where $\lambda_{\min}$ is the minimum eigenvalue of $\Sigma$ and $\vnu_{\min}$ is the eigenvector corresponding to $\lambda_{\min}$ \cite[pp. 61--63]{Poor}.
\end{proof}
	
\subsection{Stackelberg Game Analysis}
\begin{thm}
	Let $d\triangleq\sqrt{(\vS_1-\vS_0)^T\Sigma^{-1}(\vS_1-\vS_0)}$ and $d_{\max}^2\triangleq{(\sqrt{P_0}+\sqrt{P_1})^2\over\lambda_{\min}}$, where $\lambda_{\min}$ is the minimum eigenvalue of $\Sigma$. Then Theorem~\ref{thm:stackelbergCases} also holds for the vector case.
	\label{thm:stackelbergCasesVector}
\end{thm}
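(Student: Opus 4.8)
The plan is to reduce the vector problem to the scalar Stackelberg analysis of Theorem~\ref{thm:stackelbergCases} by exploiting the fact, already established in the proof of Theorem~\ref{thm:teamCaseVector}, that once the receiver plays its optimal likelihood-ratio test \eqref{eq:receiverLRTVector}, the conditional error probabilities $\mathsf{P}_{10}$ and $\mathsf{P}_{01}$ in \eqref{eq:condProbsVector} take exactly the same form as their scalar counterparts in \eqref{eq:condProbs}, namely $\mathsf{P}_{10}=\mathcal{Q}(\zeta(\ln\tau/d+d/2))$ and $\mathsf{P}_{01}=\mathcal{Q}(\zeta(-\ln\tau/d+d/2))$, now with $d^2=(\vS_1-\vS_0)^T\Sigma^{-1}(\vS_1-\vS_0)$. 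The non-informative branches for $\tau\leq0$ or $\tau=\infty$ follow from the same case analysis (Table~\ref{table:receiverCases}) as before, so I would fix $0<\tau<\infty$ and concentrate on the informative regime.

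First I would observe that the transmitter's Bayes risk \eqref{eq:updatedBayesRisk} depends on the signal pair $\{\vS_0,\vS_1\}$ only through $\mathsf{P}_{10}$ and $\mathsf{P}_{01}$, hence only through the scalar $d$. Consequently the derivative $\mathrm{d}\,r^t/\mathrm{d}\,d$ is formally identical to \eqref{eq:transmitterDerivative}, with the same definitions of $k_0$, $k_1$, $\tau$, and $\zeta$, none of which involves the dimension or the covariance structure. Therefore the entire six-case monotonicity analysis of the proof of Theorem~\ref{thm:stackelbergCases} --- the sign conditions on $k_0+k_1$ and $\ln\tau\,(k_0-k_1)$ and the resulting optimal $d^*$ --- carries over verbatim, and the equilibrium structure is again given by Table~\ref{table:stackelbergSummary}.

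The only genuinely new point is to verify that the feasible range of $d$ is exactly $[0,d_{\max}]$ with $d_{\max}^2=(\sqrt{P_0}+\sqrt{P_1})^2/\lambda_{\min}$, so that the transmitter's one-dimensional optimization over $d$ yields the claimed $d^*$. The upper endpoint is the maximization of the quadratic form $(\vS_1-\vS_0)^T\Sigma^{-1}(\vS_1-\vS_0)$ subject to $\|\vS_0\|^2\le P_0$ and $\|\vS_1\|^2\le P_1$; this is precisely the computation performed in the proof of Theorem~\ref{thm:teamCaseVector}, where the maximizer aligns $\vS_0$ and $\vS_1$ antipodally along the least-noisy eigenvector $\vnu_{\min}$, giving $d_{\max}^2=(\sqrt{P_0}+\sqrt{P_1})^2/\lambda_{\min}$. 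The value $d=0$ is attained by $\vS_0=\vS_1$ (the non-informative choice). To guarantee that every intermediate optimum --- in particular the interior value $d^*=\sqrt{\big|2\ln\tau(k_0-k_1)/(k_0+k_1)\big|}$ arising in Case-3 --- is realizable, I would note that the feasible set $\{(\vS_0,\vS_1):\|\vS_0\|^2\le P_0,\ \|\vS_1\|^2\le P_1\}$ is a product of two balls, hence connected, and that $d$ is a continuous function on it; its image is therefore the full interval $[0,d_{\max}]$.

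The main obstacle, such as it is, lies in this last reachability step rather than in any new optimization: one must confirm that restricting the transmitter to vector signals does not shrink the attainable set of $d$-values below the scalar interval, so that the optimal $d^*$ dictated by the scalar case analysis can always be implemented by an admissible signal pair. Once connectedness and the value of $d_{\max}$ are in hand, the conclusion is immediate, and the essential-uniqueness observations transfer as in the scalar case.
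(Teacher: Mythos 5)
Your proposal is correct, and its skeleton is the same as the paper's: reduce to the scalar analysis by noting that under the receiver's LRT \eqref{eq:receiverLRTVector} the error probabilities \eqref{eq:condProbsVector} depend on the signals only through $d^2=(\vS_1-\vS_0)^T\Sigma^{-1}(\vS_1-\vS_0)$, so the six-case monotonicity analysis of Theorem~\ref{thm:stackelbergCases} applies verbatim, and then check which values of $d$ are feasible. The one place you genuinely diverge is the reachability of the interior optimum $d^*=\sqrt{\bigl|2\ln\tau(k_0-k_1)/(k_0+k_1)\bigr|}$ in Case-3: the paper exhibits explicit one-parameter families of signal pairs along the minimum-eigenvalue eigenvector $\vnu_{\min}$ that realize this $d^*$, whereas you argue topologically that the feasible set (a product of two balls) is connected, $d$ is continuous on it, and $0$ and $d_{\max}$ are both attained, so the image is the whole interval $[0,d_{\max}]$. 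Both are valid; the paper's constructive route has the advantage of displaying concrete equilibrium signals (though its stated increments along $\vnu_{\min}$ carry a scaling slip --- a Euclidean separation $\ell$ along $\vnu_{\min}$ yields $d=\ell/\sqrt{\lambda_{\min}}$, so the separation should be $d^*\sqrt{\lambda_{\min}}$, not $d^*$), while your intermediate-value argument is shorter, immune to such bookkeeping, and generalizes to any connected feasible set, at the cost of not telling the transmitter which signals to use.
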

\begin{proof}
The proof of Theorem~\ref{thm:stackelbergCases} can be applied by modifying the definitions of $d$ and $d_{\max}$ as in the statement. For $d^*=d_{\max}$, the method described in the proof of Theorem~\ref{thm:teamCaseVector} can be applied for the optimal signal selection, whereas, for $d^*=0$, by choosing $\vS_0=\vS_1$, the non-informative equilibrium can be achieved. Further, for Case-3 of Theorem~\ref{thm:stackelbergCases}, in order to achieve $(d^*)^2=\Big|{2\ln\tau(k_0-k_1)\over(k_0+k_1)}\Big|<d_{\max}^2$, the signals can be chosen in the direction of $\vnu_{\min}$, that is, the eigenvector corresponding to $\lambda_{\min}$. Accordingly, $\vS_0=(-\sqrt{P_0}+t) {\vnu_{\min}\over\|\vnu_{\min}\|}$ and $\vS_1=(-\sqrt{P_0}+d^*+t) {\vnu_{\min}\over\|\vnu_{\min}\|}$ for $t\in[0,\sqrt{P_1}+\sqrt{P_0}-d^*]$ are possible optimal signal pairs. Similarly, $\vS_0=(\sqrt{P_0}-t) {\vnu_{\min}\over\|\vnu_{\min}\|}$ and $\vS_1=(\sqrt{P_0}-d^*-t) {\vnu_{\min}\over\|\vnu_{\min}\|}$ for $t\in[0,\sqrt{P_1}+\sqrt{P_0}-d^*]$ consist of another set of possible optimal signal pairs. Note that it may be possible to find optimal signal pairs $\{\vS_0,\vS_1\}\in\mathbb{S}$ that satisfy $(\vS_1-\vS_0)^T\Sigma^{-1}(\vS_1-\vS_0)=\Big|{2\ln\tau(k_0-k_1)\over(k_0+k_1)}\Big|$ in any other direction rather than the direction of $\vnu_{\min}$; however, finding a single pair that corresponds to an equilibrium would be sufficient.
\end{proof}

\subsection{Nash Game Analysis}

\begin{thm}
	Theorem~\ref{thm:nashCases} also holds for the vector case.
	\label{thm:NashCaseVector}
\end{thm}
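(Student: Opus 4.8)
The plan is to mirror the scalar Nash proof of Theorem~\ref{thm:nashCases}, reducing the vector problem to the same best-response consistency condition \eqref{eq:nashRecursion} by exploiting the fact, already established in Theorem~\ref{thm:teamCaseVector}, that the vector conditional probabilities in \eqref{eq:condProbsVector} coincide with the scalar ones once we set $d^2\triangleq(\vS_1-\vS_0)^T\Sigma^{-1}(\vS_1-\vS_0)$. First I would fix an arbitrary transmitter signaling $\mathcal{S}=\{\vS_0,\vS_1\}$ and recall that, for $0<\tau<\infty$, the receiver's optimal (single-threshold) rule is the LRT \eqref{eq:receiverLRTVector}, namely a linear test $\zeta(\vS_1-\vS_0)^T\Sigma^{-1}\vy$ against a constant threshold. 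As in Remark~\ref{rem:unstableNash}, the degenerate case $\vS_0=\vS_1$ gives $a=0$ and a non-informative equilibrium that always exists; so I would assume the receiver's effective test vector $\mathbf{a}\triangleq\zeta\Sigma^{-1}(\vS_1-\vS_0)$ is nonzero and compute the transmitter's best response against a generic linear receiver rule $\mathbf{a}^T\vy \gtrless \eta$.

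Next I would write the transmitter's Bayes risk as in \eqref{eq:transmitterRiskNash} with the conditional probabilities expressed through $\mathcal{Q}\!\left((\eta-\mathbf{a}^T\vS_0)/(\|\Sigma^{1/2}\mathbf{a}\|)\right)$ and the analogous term for $\mathsf{P}_{01}$, and differentiate with respect to each signal vector. Because the power constraints $\lVert\vS_0\rVert^2\leq P_0$ and $\lVert\vS_1\rVert^2\leq P_1$ decouple the two signals, the transmitter's optimization over $\vS_0$ and $\vS_1$ separates: each $\vS_i$ should be chosen to move $\mathbf{a}^T\vS_i$ in the direction dictated by $\text{sgn}(C^t_{1i}-C^t_{0i})$, and by Cauchy--Schwarz the norm-constrained optimizer aligns $\vS_i$ with $\pm\mathbf{a}$ (equivalently $\pm\Sigma^{-1}(\vS_1-\vS_0)$). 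This yields the vector analogue of the scalar best responses, $\vS_0^{\mathrm{BR}}=-\text{sgn}(C^t_{10}-C^t_{00})\sqrt{P_0}\,\mathbf{a}/\|\mathbf{a}\|$ and $\vS_1^{\mathrm{BR}}=\text{sgn}(C^t_{01}-C^t_{11})\sqrt{P_1}\,\mathbf{a}/\|\mathbf{a}\|$, so that both optimal signals lie along the single line spanned by $\mathbf{a}$.

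The crucial observation is that once both signals are constrained to this one-dimensional line, the entire problem collapses onto a scalar signaling problem along the direction $\mathbf{a}$ with an effective noise variance $\|\Sigma^{1/2}\mathbf{a}\|^2/\|\mathbf{a}\|^2$. Substituting the transmitter best responses back into the receiver's rule and demanding consistency reproduces exactly \eqref{eq:nashRecursion}: the sign condition $\text{sgn}(\xi_1)\sqrt{P_1}+\text{sgn}(\xi_0)\sqrt{P_0}>0$, which is independent of $\Sigma$ because the covariance only rescales magnitudes and never flips signs along a fixed direction. Therefore the four-case analysis (Case-3-a through Case-3-d) of Theorem~\ref{thm:nashCases} carries over verbatim, giving the same informative/non-informative classification in Table~\ref{table:nashSummary}, with the informative equilibrium signals now aligned along $\vnu_{\min}$ to maximize $d$ exactly as in the vector team case of Theorem~\ref{thm:teamCaseVector}.

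The main obstacle I anticipate is the best-response computation for the transmitter: unlike the scalar case where the optimizer is immediate, here I must verify that the norm-constrained optimum of a monotone function of $\mathbf{a}^T\vS_i$ indeed forces collinearity with $\mathbf{a}$, and I must confirm that the resulting coupled fixed-point equation $\mathbf{a}=\zeta\Sigma^{-1}(\vS_1^{\mathrm{BR}}-\vS_0^{\mathrm{BR}})$ is self-consistent, i.e.\ that the direction of $\mathbf{a}$ is preserved under one round of best responses. Since $\vS_1^{\mathrm{BR}}-\vS_0^{\mathrm{BR}}$ is itself parallel to $\mathbf{a}$, the map $\mathbf{a}\mapsto\zeta\Sigma^{-1}(\vS_1^{\mathrm{BR}}-\vS_0^{\mathrm{BR}})$ preserves the line through $\mathbf{a}$ only when the eigenvector condition is respected; I would therefore argue that the informative equilibrium is realized by choosing $\mathbf{a}$ (equivalently $\vS_1-\vS_0$) along $\vnu_{\min}$, which is a genuine fixed direction of $\Sigma^{-1}$ and simultaneously the norm-efficient choice, thereby closing the recursion and establishing essential uniqueness just as in Remark~\ref{rem:nashEq}.
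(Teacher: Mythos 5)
Your proposal is correct and follows essentially the same route as the paper's proof: restrict the receiver to linear single-threshold rules $\va^T\vy \gtrless \eta$, handle $\va=\mathbf{0}$ as the ever-present non-informative equilibrium, show via the norm-constrained (Cauchy--Schwarz) argument that the transmitter's best response aligns both signals with $\pm\va/\|\va\|$, and substitute into the receiver's LRT \eqref{eq:receiverLRTVector} to recover exactly the scalar consistency condition \eqref{eq:nashRecursion}, so Table~\ref{table:nashSummary} carries over. Your explicit resolution of the directional fixed-point issue (noting that $\va\mapsto\zeta\Sigma^{-1}(\vS_1^{\mathrm{BR}}-\vS_0^{\mathrm{BR}})$ preserves the direction of $\va$ only along an eigenvector of $\Sigma$, and exhibiting the equilibrium along $\vnu_{\min}$) is in fact slightly more careful than the paper, which extracts only the scalar sign condition \eqref{eq:nashVectorRecursion} by right-multiplying the best-response identity by $\va$.
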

\begin{proof}
	Let the transmitter choose any signals $\mathcal{S}=\{\vS_0,\vS_1\}$. Assuming nonzero priors $\pi_0^t, \pi_0^r, \pi_1^t$ and $\pi_1^r$, the optimal decision rule for the receiver is given by \eqref{eq:receiverLRTVector}. Similar to the team case analysis in Section~\ref{sec:vecTeam}, the equilibrium is non-informative if $\tau\leq0$ or $\tau=\infty$; thus, $0<\tau<\infty$ can be assumed. 

Now assume that the receiver applies a single-threshold rule; i.e., $\delta : \Bigg\{ \va^T \vy \overset{\mathcal{H}_1}{\underset{\mathcal{H}_0}{\gtreqless}}\eta$ where $\va\in\mathbb{R}^n$ and $\eta\in\mathbb{R}$.
\begin{remark}\label{rem:unstableNashVec}
	Note that for $\va=\mathbf{0}$, the receiver chooses either always $\mathcal{H}_0$ or always $\mathcal{H}_1$ without considering the value of $\vy$, which implies a non-informative equilibrium. Therefore, $\vS_0^*=\vS_1^*$, $\va^*=\mathbf{0}$, and $\eta^*=\zeta(\tau-1)$ (i.e., the decision rule of the receiver is $\delta^* : \zeta\overset{\mathcal{H}_1}{\underset{\mathcal{H}_0}{\gtreqless}} \zeta\tau$) constitute a non-informative equilibrium regardless of the values of the priors and costs of the players.
\end{remark} 
Thus, due to the remark above, it can be assumed that $\va\neq\mathbf{0}$ holds. Since $\va^T \vY \sim\mathcal{N} \Big(\va^T \vS_i,\va^T\Sigma\va\Big)$ under $\mathcal{H}_i$ for $i\in\{0,1\}$,
	the conditional probabilities are $\mathsf{P}_{10}=\mathcal{Q}\left(\eta-\va^T \vS_0\over\sqrt{\va^T\Sigma\va}\right)$ and $\mathsf{P}_{01}=\mathcal{Q}\left(-{\eta-\va^T \vS_1\over\sqrt{\va^T\Sigma\va}}\right)$.
	Then, the Bayes risk of the transmitter becomes
	\begin{align*}
	r^t(\mathcal{S},\delta) = \pi_0^t C^t_{00} + \pi_1^t C^t_{11} &+ \pi_0^t (C^t_{10}-C^t_{00})\mathcal{Q}\left(\eta-\va^T \vS_0\over\sqrt{\va^T\Sigma\va}\right) + \pi_1^t (C^t_{01}-C^t_{11})\mathcal{Q}\left(-{\eta-\va^T \vS_1\over\sqrt{\va^T\Sigma\va}}\right) \,.
	\end{align*}
	Since the power constraints are $\lVert \vS_0 \rVert ^2 \leq P_0$ and $\lVert \vS_1 \rVert ^2 \leq P_1$, the signals $\vS_0$ and $\vS_1$ can be regarded as independent. Since $\mathcal{Q}$ function is a monotone decreasing, the following cases hold for $i\in\{0,1\}$:
\begin{enumerate}
	\item \underline{$C^t_{1i}<C^t_{0i}$} $\Rightarrow$ Then, $r^t(\mathcal{S},\delta)$ is a decreasing function of $\va^T \vS_i$, thus the transmitter always chooses $\va^T \vS_i$ as maximum subject to $\lVert \vS_i \rVert ^2 \leq P_i$; i.e., $\vS_i=\sqrt{P_i}{\va\over\|\va\|}$. 
	\item \underline{$C^t_{1i}=C^t_{0i}$} $\Rightarrow$ Then $\vS_i$ has no effect on the Bayes risk of the transmitter.
	\item \underline{$C^t_{1i}>C^t_{0i}$} $\Rightarrow$ Then, $r^t(\mathcal{S},\delta)$ is an increasing function of $\va^T \vS_i$, thus the transmitter always chooses $\va^T \vS_i$ as minimum subject to $\lVert \vS_i \rVert ^2 \leq P_i$; i.e., $\vS_i=-\sqrt{P_i}{\va\over\|\va\|}$.
\end{enumerate}
Thus, the the optimal signals can be characterized as $S_0=-\text{sgn}(C^t_{10}-C^t_{00})\sqrt{P_0}{\va\over\|\va\|}$ and $S_1=\text{sgn}(C^t_{01}-C^t_{11})\sqrt{P_1}{\va\over\|\va\|}$.
	
By using the expressions above, the cases can be listed as follows:
\begin{enumerate}
	\item \underline{$\tau\leq0$ or $\tau=\infty$} $\Rightarrow$ The equilibrium is non-informative.
	\item \underline{$C^t_{10}=C^t_{00}$ (and/or $C^t_{01}=C^t_{11}$)} $\Rightarrow$ $\vS_0$ (and/or $\vS_1$) has no effect on the Bayes risk of the transmitter, thus it can arbitrarily be chosen by the transmitter. In this case, if the transmitter chooses $\vS_0=\vS_1$; i.e., he does not send anything useful to the receiver, and the receiver applies the decision rule $\delta : \zeta \overset{\mathcal{H}_1}{\underset{\mathcal{H}_0}{\gtreqless}} \zeta\tau$; i.e., he only considers the prior information (totally discards the information sent by the transmitter). Then there exists a non-informative equilibrium.
\item Notice that, since $0<\tau<\infty$ is assumed, $\zeta=\text{sgn}(C^r_{01}-C^r_{11})=\text{sgn}(C^r_{10}-C^r_{00})$ is obtained. Now, assume that the decision rule of the receiver is $\delta : \Bigg\{ \va^T \vy \overset{\mathcal{H}_1}{\underset{\mathcal{H}_0}{\gtreqless}}\eta$. Then, the transmitter selects $S_0=-\text{sgn}(C^t_{10}-C^t_{00})\sqrt{P_0}{\va\over\|\va\|}$ and $S_1=\text{sgn}(C^t_{01}-C^t_{11})\sqrt{P_1}{\va\over\|\va\|}$ as optimal signals, and the decision rule becomes \eqref{eq:receiverLRTVector}. By combining the best responses of the transmitter and the receiver,
	\begin{align}
\va^T&=\zeta(\vS_1-\vS_0)^T\Sigma^{-1}=\zeta{\va^T\over\|\va\|}\left(\text{sgn}(C^t_{01}-C^t_{11})\sqrt{P_1}+\text{sgn}(C^t_{10}-C^t_{00})\sqrt{P_0}\right)\Sigma^{-1} \nn\\
\Rightarrow & \va^T\va = {\va^T\Sigma^{-1}\va\over\|\va\|} \zeta\left(\text{sgn}(C^t_{01}-C^t_{11})\sqrt{P_1}+\text{sgn}(C^t_{10}-C^t_{00})\sqrt{P_0}\right) \nn\\
\Rightarrow & \underbrace{\text{sgn}(C^t_{01}-C^t_{11})\over\text{sgn}(C^r_{01}-C^r_{11})}_{=\text{sgn}(\xi_1)}\sqrt{P_1}+\underbrace{\text{sgn}(C^t_{10}-C^t_{00})\over\text{sgn}(C^r_{10}-C^r_{00})}_{=\text{sgn}(\xi_0)}\sqrt{P_0} > 0 \,.
\label{eq:nashVectorRecursion}
\end{align} 
Notice that the expressions in \eqref{eq:nashVectorRecursion} and \eqref{eq:nashRecursion} of Theorem~\ref{thm:nashCases} are the same, and Remark~\ref{rem:unstableNash} and Remark~\ref{rem:unstableNashVec} are equivalent; hence, the Nash equilibrium solution of Theorem~\ref{thm:nashCases} also holds for the vector case. 
	\end{enumerate}
\end{proof}

\section{EXTENSION TO a SCENARIO with an AVERAGE POWER CONSTRAINT}

Besides the peak power constraint considered in the previous sections, the average power constraint can be assumed at the transmitter side. Before presenting the technical results, we provide the following lemma which will be utilized in the equilibrium analyses of the team and Stackelberg setups.

\begin{lemma}\label{lem:maxDistance}
The optimal solutions to the optimization problem
\begin{align}
	\underset{{S_0,S_1}}{\text{sup}} ~ (S_1-S_0)^2 ~~~
\text{s.t.} ~~ \beta_0S_0^2+\beta_1S_1^2 \leq P, ~~ \beta_0,\beta_1\in\mathbb{R}_{>0}
\label{eq:maxProblemClosedForm}
\end{align}
are $(S_0^*,S_1^*)=\left(-\sqrt{{\beta_1\over\beta_0(\beta_0+\beta_1)}P},\sqrt{{\beta_0\over\beta_1(\beta_0+\beta_1)}P}\right)$ and $(S_0^*,S_1^*)=\left(\sqrt{{\beta_1\over\beta_0(\beta_0+\beta_1)}P},-\sqrt{{\beta_0\over\beta_1(\beta_0+\beta_1)}P}\right)$.
\end{lemma}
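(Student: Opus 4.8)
The plan is to exploit the fact that, although the objective $(S_1-S_0)^2$ is convex, the feasible set $\{(S_0,S_1):\beta_0 S_0^2+\beta_1 S_1^2\le P\}$ is a compact ellipse, so by continuity the supremum is attained and is in fact a maximum. Moreover, at any maximizer the power constraint must be active: if $\beta_0 S_0^2+\beta_1 S_1^2<P$, then scaling $(S_0,S_1)$ by a factor $t>1$ keeps the point feasible for $t$ close enough to $1$ while multiplying the objective by $t^2$, which is a strict increase whenever $S_1\neq S_0$; and the case $S_1=S_0$ gives objective value $0$, which is plainly not optimal. Hence it suffices to optimize over the boundary $\beta_0 S_0^2+\beta_1 S_1^2=P$.

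First I would pass to the natural coordinates $u=(\sqrt{\beta_0}\,S_0,\ \sqrt{\beta_1}\,S_1)$, so that the active constraint reads $\|u\|^2=P$, and rewrite the objective as a linear functional of $u$, namely $S_1-S_0=\langle u,v\rangle$ with $v=(-1/\sqrt{\beta_0},\ 1/\sqrt{\beta_1})$. The Cauchy--Schwarz inequality then yields $(S_1-S_0)^2=\langle u,v\rangle^2\le \|u\|^2\,\|v\|^2 = P\big(\tfrac{1}{\beta_0}+\tfrac{1}{\beta_1}\big)=P\,\tfrac{\beta_0+\beta_1}{\beta_0\beta_1}$, which is a closed-form upper bound on the optimal value.

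Next I would extract the equality conditions. Cauchy--Schwarz is tight exactly when $u$ is parallel to $v$, i.e. $u=c\,v$ for some scalar $c$, which translates to $S_0=-c/\beta_0$ and $S_1=c/\beta_1$; imposing $\|u\|^2=P$ fixes $c^2=P\beta_0\beta_1/(\beta_0+\beta_1)$, so $c=\pm\sqrt{P\beta_0\beta_1/(\beta_0+\beta_1)}$. Substituting the two signs of $c$ into $S_0=-c/\beta_0,\ S_1=c/\beta_1$ gives precisely the two claimed optimizers, and shows that the upper bound above is achieved, so the supremum equals the maximum.

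I do not anticipate a real obstacle; the single point requiring care is the \emph{direction} of optimization. Because a convex objective is being maximized, one cannot invoke the usual convex-programming sufficiency of stationarity, so a bare Lagrange/KKT computation would only produce candidate stationary points that must still be compared against the boundary behavior. The Cauchy--Schwarz route sidesteps this by delivering both a valid upper bound and its exact attainment in a single step. As an alternative, the Lagrangian $L=(S_1-S_0)^2-\lambda(\beta_0 S_0^2+\beta_1 S_1^2-P)$ gives the stationarity relation $\beta_0 S_0=-\beta_1 S_1$, which, combined with the active constraint, reproduces the same two points.
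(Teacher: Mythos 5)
Your proof is correct, and it takes a genuinely different route from the paper's. The paper argues purely algebraically: it expands $\beta_0\beta_1(S_1-S_0)^2$, bounds the cross term first via $-2S_0S_1\le 2|S_0||S_1|$ and then via the arithmetic--geometric mean inequality $2|\beta_0 S_0|\,|\beta_1 S_1|\le \beta_0^2S_0^2+\beta_1^2S_1^2$, arriving at the bound $(S_1-S_0)^2\le \frac{\beta_0+\beta_1}{\beta_0\beta_1}P$, and then reads off the equality condition of each step ($S_0S_1\le 0$, $\beta_0|S_0|=\beta_1|S_1|$, constraint active) to identify exactly the two claimed maximizers. You instead rescale coordinates so that the constraint set becomes a disk, write $S_1-S_0$ as a linear functional $\langle u,v\rangle$ with $v=(-1/\sqrt{\beta_0},1/\sqrt{\beta_1})$, and invoke Cauchy--Schwarz; the parallelism condition $u=cv$ together with $\|u\|^2=P$ then yields the same two points. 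The two arguments are mathematically equivalent---in two dimensions Cauchy--Schwarz is precisely the AM--GM manipulation the paper performs---but yours is more geometric and generalizes verbatim to ellipsoidal constraints in higher dimensions (maximizing a linear functional over an ellipsoid), while the paper's is more elementary and self-contained, needing no change of variables. One small remark: your preliminary compactness and active-constraint discussion, while correct, is dispensable, since the Cauchy--Schwarz bound $(S_1-S_0)^2\le\|u\|^2\|v\|^2\le P\|v\|^2$ already holds on the entire feasible set, so the supremum is pinned down directly by exhibiting the equality cases, exactly as the paper does.
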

\begin{proof}
	Observe the following inequalities:
\begin{align}
\beta_0\beta_1(S_1-S_0)^2&=\beta_0\beta_1\left(S_1^2-2S_1S_0+S_0^2\right)\overset{(a)}{\leq} \beta_0\beta_1\left(S_1^2+2|S_1||S_0|+S_0^2\right) \nn\\
&= \beta_0\beta_1\left(S_1^2+S_0^2\right)+2|\beta_0S_0||\beta_1S_1|\overset{(b)}{\leq} \beta_0\beta_1\left(S_1^2+S_0^2\right)+\beta_0^2S_0^2+\beta_1^2S_1^2 \nn\\
&= \beta_0\left(\beta_0S_0^2+\beta_1S_1^2\right) + \beta_1\left(\beta_0S_0^2+\beta_1S_1^2\right)\overset{(c)}{\leq} (\beta_0+\beta_1)P \,.
\end{align}
Here, (b) follows from the inequality for the arithmetic and geometric mean, and the equality holds iff $\beta_1^2S_1^2=\beta_0^2S_0^2$. For (a), the equality holds iff $S_1S_0\leq0$; and for (c), the equality holds iff $\beta_0S_0^2+\beta_1S_1^2=P$. Thus, the upper bound of $(S_1-S_0)^2$ can be achieved with optimal solutions $(S_0^*,S_1^*)=\left(-\sqrt{{\beta_1\over\beta_0(\beta_0+\beta_1)}P},\sqrt{{\beta_0\over\beta_1(\beta_0+\beta_1)}P}\right)$ or $(S_0^*,S_1^*)=\left(\sqrt{{\beta_1\over\beta_0(\beta_0+\beta_1)}P},-\sqrt{{\beta_0\over\beta_1(\beta_0+\beta_1)}P}\right)$ so that $(S_1^*-S_0^*)^2={\beta_0+\beta_1\over\beta_0\beta_1}P$.
\end{proof}	

Consider a transmitter with an average power constraint; i.e., the transmitter performs the optimal signal design problem under the power constraint below:
\begin{align*}
\mathbb{S}\triangleq\{\mathcal{S}=\{S_0,S_1\}:\pi_0^t| S_0 | ^2 + \pi_1^t| S_1 | ^2\leq P_{\mathrm{avg}}\} \;,
\end{align*}
where $P_{\mathrm{avg}}$ denotes the average power limit.

\subsection{Team Theoretic Analysis}
In order to minimize the Bayes risk, the transmitter always prefers the maximum $d={|S_1-S_0|\over\sigma}$. Thus, by Lemma~\ref{lem:maxDistance}, the optimal signal levels are chosen as either $(S_0^*,S_1^*)=\left(-\sqrt{{\pi_1^t\over\pi_0^t(\pi_0^t+\pi_1^t)}P_{\mathrm{avg}}},\sqrt{{\pi_0^t\over\pi_1^t(\pi_0^t+\pi_1^t)}P_{\mathrm{avg}}}\right)$ or $(S_0^*,S_1^*)=\left(\sqrt{{\pi_1^t\over\pi_0^t(\pi_0^t+\pi_1^t)}P_{\mathrm{avg}}},-\sqrt{{\pi_0^t\over\pi_1^t(\pi_0^t+\pi_1^t)}P_{\mathrm{avg}}}\right)$. The corresponding optimal decision rule of the receiver is chosen based on the rule in \eqref{eq:receiverLRT} accordingly. Actually, the equilibrium points are essentially unique; i.e., they result in the same Bayes risks for the transmitter and the receiver.

\subsection{Stackelberg Game Analysis}
Similar to the team setup analysis, for every possible case in Table~\ref{table:stackelbergSummary}, there are more than one equilibrium points, and they are essentially unique since the Bayes risks of the transmitter and the receiver depend on $d$. For example, for $d^*=d_{\max}\triangleq{\sqrt{{\pi_0^t+\pi_1^t\over\pi_0^t\pi_1^t}P_{\mathrm{avg}}}\over\sigma}$, $(S_0^*,S_1^*)=\left(-\sqrt{{\pi_1^t\over\pi_0^t(\pi_0^t+\pi_1^t)}P_{\mathrm{avg}}},\sqrt{{\pi_0^t\over\pi_1^t(\pi_0^t+\pi_1^t)}P_{\mathrm{avg}}}\right)$ and $(S_0^*,S_1^*)=\left(\sqrt{{\pi_1^t\over\pi_0^t(\pi_0^t+\pi_1^t)}P_{\mathrm{avg}}},-\sqrt{{\pi_0^t\over\pi_1^t(\pi_0^t+\pi_1^t)}P_{\mathrm{avg}}}\right)$ are the only possible choices for the transmitter, and the decision rule of the receiver is chosen based on the rule in \eqref{eq:receiverLRT}. However, for $d^*=0$, there are infinitely many choices for the transmitter and the receiver, and all of them are essentially unique; i.e., they result in the same Bayes risks for the transmitter and the receiver. A similar argument holds for $d^*=\sqrt{\Big|{2\ln\tau(k_0-k_1)\over(k_0+k_1)}\Big|}$; i.e., there are infinitely many choices for the transmitter and the receiver, and all of them are essentially unique.

\subsection{Nash Game Analysis}
For $0<\tau<\infty$, if the receiver applies a single-threshold rule\footnote{Due to Remark~\ref{rem:unstableNash}, $S_0^*=S_1^*$, $a^*=0$, and $\eta^*=\zeta(\tau-1)$ always constitute a non-informative equilibrium regardless of the values of the priors and costs of the players}; i.e., $\delta : \Bigg\{ a y \overset{\mathcal{H}_1}{\underset{\mathcal{H}_0}{\gtreqless}}\eta$ where $a\in\mathbb{R}-\{0\}$, and $\eta\in\mathbb{R}$, after analyzing the derivative of the Bayes risk of the transmitter in \eqref{eq:transmitterRiskNash} with respect to the signals, the following can be obtained:
	\begin{enumerate}
		\item \underline{$C^t_{1i}=C^t_{0i}$} $\Rightarrow$ $S_i$ has no effect on the Bayes risk of the transmitter.
		\item \underline{$C^t_{1i}<C^t_{0i}$ or $C^t_{1i}>C^t_{0i}$} $\Rightarrow$ $r^t(\mathcal{S},\delta)$ is a decreasing (increasing) function of $S_i$ if $a(C^t_{1i}-C^t_{0i})$ is negative (positive); thus the transmitter chooses the optimal signal level $S_i$ as large as possible in absolute value. Therefore, the transmitter prefers to utilize the maximum possible total power; i.e., the power constraint can be considered as $\pi_1^tS_1^2+\pi_0^tS_0^2=P_{\mathrm{avg}}$ rather than $\pi_1^tS_1^2+\pi_0^tS_0^2\leq P_{\mathrm{avg}}$.
	\end{enumerate}
	By using the analysis above, the cases can be listed as follows:
	\begin{enumerate}
		\item \underline{$C^t_{1i}=C^t_{0i}$} $\Rightarrow$ If $C^t_{1j}=C^t_{0j}$ also holds for $j\neq i$, then neither $S_0$ nor $S_1$ changes the Bayes risk of the transmitter; thus, there exists a non-informative equilibrium. Otherwise; i.e., $C^t_{1j}\neq C^t_{0j}$ for $j\neq i$, the transmitter chooses the optimal signal levels as $S_i=0$ and $S_j=-\text{sgn}\left(a(C^t_{1j}-C^t_{0j})\right)\sqrt{P_{\mathrm{avg}}\over\pi_j^t}$, and the equilibrium is informative.
		\item \underline{$C^t_{10}\neq C^t_{00}$ and $C^t_{11}\neq C^t_{01}$} $\Rightarrow$ Since the transmitter adjust the signal levels such that $\pi_1^tS_1^2+\pi_0^tS_0^2=P$, the optimal signals must be in the form of $S_0=-\text{sgn}\big(a(C^t_{10}-C^t_{00})\big)x$ and $S_1=\text{sgn}\big(a(C^t_{01}-C^t_{11})\big)\sqrt{P_{\mathrm{avg}}-\pi_0^tx^2\over\pi_1^t}$ for $x\in\left[0,\sqrt{P_{\mathrm{avg}}\over\pi_0^t}\right]$. Then, the Bayes risk of the transmitter in \eqref{eq:transmitterRiskNash} can be expressed as
		\begin{align}
		\begin{split}\label{eq:averageNashRisk}
		r^t(\mathcal{S},\delta) &= \pi_0^t C^t_{00} + \pi_1^t C^t_{11} + \pi_0^t (C^t_{10}-C^t_{00})\mathcal{Q}\left(\eta+|a|\text{sgn}\left(C^t_{10}-C^t_{00}\right)x\over|a|\sigma\right)\\
		&\qquad+ \pi_1^t (C^t_{01}-C^t_{11})\mathcal{Q}\left(-{\eta-|a|\text{sgn}\left(C^t_{01}-C^t_{11}\right)\sqrt{P_{\mathrm{avg}}-\pi_0^tx^2\over\pi_1^t}\over|a|\sigma}\right) \,.
		\end{split}
		\end{align}
		Note that the convexity of $r^t(\mathcal{S},\delta)$ in \eqref{eq:averageNashRisk} with respect to $x$ changes depending on the other parameters (i.e., priors, costs and the receiver policy); hence, the optimal $x$ cannot be expressed in a closed form. Let $x^*$ be an optimal solution to \eqref{eq:averageNashRisk}; i.e., $x^*=\arg\min_{x\in\left[0,\sqrt{P_{\mathrm{avg}}\over\pi_0^t}\right]} r^t(\mathcal{S},\delta)$, which implies that the optimal signal levels are $S_0=-\text{sgn}\big(a(C^t_{10}-C^t_{00})\big)x^*$ and $S_1=\text{sgn}\big(a(C^t_{01}-C^t_{11})\big)\sqrt{P_{\mathrm{avg}}-\pi_0^t(x^*)^2\over\pi_1^t}$. Then, similar to \eqref{eq:nashRecursion}, the following condition on the existence of an equilibrium can be obtained:
		\begin{align}
		\underbrace{\text{sgn}(C^t_{01}-C^t_{11})\over\text{sgn}(C^r_{01}-C^r_{11})}_{=\text{sgn}(\xi_1)}\sqrt{P_{\mathrm{avg}}-\pi_0^t(x^*)^2\over\pi_1^t}+\underbrace{\text{sgn}(C^t_{10}-C^t_{00})\over\text{sgn}(C^r_{10}-C^r_{00})}_{=\text{sgn}(\xi_0)}x^* > 0 \,.
		\label{eq:nashRecursionAvg}
		\end{align} 
	 	Here, similar to the analysis under the individual power constraint in Theorem~\ref{thm:nashCases}, unless \eqref{eq:nashRecursionAvg} is satisfied, the best responses of the transmitter and the receiver cannot match each other. In particular, 
	 		\begin{enumerate}
	 		\item \underline{$\xi_0<0$ and $\xi_1<0$} $\Rightarrow$ There does not exist a Nash equilibrium for $a\neq0$; however, due to Remark~\ref{rem:unstableNash}, for $a=0$, there always exist non-informative equilibria.
	 		\item \underline{$\xi_0<0$ and $\xi_1>0$} $\Rightarrow$ If $\sqrt{P_{\mathrm{avg}}-\pi_0^t(x^*)^2\over\pi_1^t}>x^* \Rightarrow x^* < \sqrt{P_{\mathrm{avg}}}$, then the Nash equilibrium is informative. If $x^*=\sqrt{P_{\mathrm{avg}}}$, there exists a non-informative equilibrium. Otherwise; i.e., if $x^*>\sqrt{P_{\mathrm{avg}}}$, there does not exist a Nash equilibrium for $a\neq0$; however, due to Remark~\ref{rem:unstableNash}, for $a=0$, there always exist non-informative equilibria.    
	 		\item \underline{$\xi_0>0$ and $\xi_1<0$} $\Rightarrow$ If $x^* > \sqrt{P_{\mathrm{avg}}}$, then the Nash equilibrium is informative. If $x^*=\sqrt{P_{\mathrm{avg}}}$, there exists a non-informative equilibrium. Otherwise; i.e., if $x^*<\sqrt{P_{\mathrm{avg}}}$, there does not exist a Nash equilibrium for $a\neq0$; however, due to Remark~\ref{rem:unstableNash}, for $a=0$, there always exist non-informative equilibria.     
	 		\item \underline{$\xi_0>0$ and $\xi_1>0$} $\Rightarrow$ There exists an informative Nash equilibrium.
	 	\end{enumerate}
	\end{enumerate}	
%

\section{CONCLUDING REMARKS}

In this paper, we considered binary signaling problems in which the decision makers (the transmitter and the receiver) have subjective priors and/or misaligned objective functions. Depending on the commitment nature of the transmitter to his policies, we formulated the binary signaling problem as a Bayesian game under either Nash or Stackelberg equilibrium concepts and established equilibrium solutions and their properties. 

We showed that there can be informative or non-informative equilibria in the binary signaling game under the Stackelberg and Nash assumptions, and derived the conditions under which an informative equilibrium exists. We also studied the effects of small perturbations around the team setup (with identical priors and costs) 
and showed that the game equilibrium behavior around the team setup is robust under the Nash assumption, whereas it is not robust under the Stackelberg assumption.

The binary setup considered here can be extended to the $M$-ary hypothesis testing setup, and the corresponding signaling game structure can be formed in order to model a game between players with a multiple-bit communication channel. The extension to more general noise distributions is possible: the Nash equilibrium analysis holds identically when the noise distribution leads to a single-threshold test. Finally, in addition to the Bayesian approach considered here, different cost structures and parameters can be introduced by investigating the game under Neyman-Pearson and mini-max criteria. 

\bibliographystyle{IEEEtran}
\bibliography{../SerkanBibliography}

\end{document}